\theoremstyle{plain} 
\newtheorem{theorem}{\indent\bf Theorem}[section]
\newtheorem{lemma}[theorem]{\indent\bf Lemma}
\theoremstyle{definition} 
\newtheorem{definition}[theorem]{\indent\bf Definition}
\newtheorem{remark}[theorem]{\indent\bf Remark}
\newtheorem{example}[theorem]{\indent\bf Example}
\newtheorem{question}[theorem]{\indent\bf Question}
\newcommand{\ddbar}{\partial \bar{\partial}}
\newcommand{\dbar}{\bar{\partial}}
\begin{document}
\pagestyle{plain}
\thispagestyle{plain}

\title[On converse of H\"ormander estimate]
{A converse of H\"ormander's $L^2$-estimate and new positivity notions for vector bundles}

\author[Genki HOSONO and Takahiro INAYAMA]{Genki HOSONO$^{1}$ and Takahiro INAYAMA$^{2}$}
\address{ 
$^{1}$  Graduate School of Mathematical Sciences\\
The University of Tokyo\\
3-8-1 Komaba, Meguro-ku\\
Tokyo, 153-8914\\
Japan
}
\email{genkih@ms.u-tokyo.ac.jp}
\address{
$^{2}$  Graduate School of Mathematical Sciences\\
The University of Tokyo\\
3-8-1 Komaba, Meguro-ku\\
Tokyo, 153-8914\\
Japan
}
\email{inayama@ms.u-tokyo.ac.jp}

\begin{abstract}
We study conditions of H\"ormander's $L^2$-estimate and the Ohsawa-Takegoshi extension theorem. 
Introducing a twisted version of H\"ormander-type condition, we show a converse of H\"ormander $L^2$-estimate under some regularity assumptions on an $n$-dimensional domain.
This result is a partial generalization of the 1-dimensional result obtained by Berndtsson. 
We also define new positivity notions for vector bundles with singular Hermitian metrics by using these conditions. 
We investigate these positivity notions and compare them with classical positivity notions. 
\end{abstract}


\maketitle


\section{Introduction}
There are many curvature-positivity notions for Hermitian holomorphic vector bundles on complex manifolds. The situation is simple for line bundles: a Hermitian metric $h=e^{-\phi}$ on a line bundle has positive curvature if and only if the corresponding local weight $\phi$ is plurisubharmonic. When we consider a singular Hermitian metric, its curvature, defined as a current, is positive.

For general vector bundles, the situation is much complicated. There are some positivity notions which are not equivalent to each other.
When we consider a singular metric on vector bundles, we cannot even define its curvature (\cite[Theorem 1.3]{Rau1}).
Therefore, to generalize curvature-positivity concepts of vector bundles for singular metrics, we have to seek their characterizations without using curvature tensors.

For the Griffiths semi-positivity, such a characterization has been obtained (cf. \cite{HPS}, \cite{PT}, \cite{Rau1}). 
On the other hand, we do not know such a characterization for the Nakano semi-positivity, which is stronger than the Griffiths positivity and a natural setting for using H\"ormander's $L^2$-methods. This is one of the difficulties in the study of singular Hermitian metrics on vector bundles.

In \cite{DWZZ}, a new characterization of a plurisubharmonicity was obtained. Namely, the possibility of the $L^2$-extension with a certain condition on its estimate is equivalent to the plurisubharmonicity.
The precise condition is as follows:
\begin{definition}[\cite{DWZZ}]\label{def:ohsawa-takegoshi-condition}
	Let $\Omega \subset \mathbb{C}^n$ be a domain and $\phi$ be an upper semi-continuous function.
	We say that $(\Omega, \phi)$ satisfies {\it the multiple $L^2$-extension property} if there exists a number $C_m > 0$ for each $m$ such that
	\begin{itemize}
		\item for every $p \in \Omega$ with $\phi(p) \neq -\infty$, there exists a holomorphic function $f$ satisfying $f(p) = 1$ and
		$$\int_\Omega |f|^2 e^{-m\phi} \leq C_m e^{-m\phi(0)}, $$ and 
		\item (Growth condition) $C_m$ satisfies the condition $\lim_{m \to +\infty} \frac{\log C_m}{m} = 0$. 
	\end{itemize}
\end{definition}

For vector bundles, it is proved in \cite{DWZZ} that the multiple $L^2$-extension property implies the Griffiths positivity.
In \cite{Ber}, it is proved that, for a continuous function on a one-dimensional domain, the availability of the H\"ormander estimate implies the subharmonicity.
In \cite{DWZZ}, Deng, Wang, Zhang, and Zhou asked a problem if one can extend this result to higher dimensional cases. 
In Section \ref{sec:horlinebundle}, we show a partial converse of H\"ormander's $L^2$-estimate for line bundles on an $n$-dimensional domain.
We define the twisted H\"ormander condition as follows:
\begin{definition}
	We say that $(\Omega, \phi)$ satisfies {\it the twisted H\"ormander condition} if, for every positive integer $m$, smooth strictly plurisubharmonic function $\psi$ on $\Omega$, and smooth $\dbar$-closed $(0,1)$-form $\alpha$ with compact support and finite norm $\int_\Omega |\alpha|^2_{\sqrt{-1}\ddbar\psi} e^{-(m\phi + \psi)}<+\infty$, there exists a smooth function $u$ such that
	\begin{itemize}
		\item $\dbar u = \alpha$, and
		\item $\displaystyle\int_\Omega |u|^2 e^{-(m\phi + \psi)}  \leq \displaystyle\int_\Omega |\alpha|^2_{\sqrt{-1}\ddbar\psi} e^{-(m\phi + \psi)}.$
	\end{itemize}
\end{definition}
Using this, we state a converse of H\"ormander's $L^2$-estimate as follows: 
\begin{theorem}\label{mainthm:hormander-converse}
Let $\Omega \subset \mathbb{C}^n$ be a domain. Assume that ${\rm Pole}(\phi)$ is closed and $\phi$ is a locally H\"older continuous function on $\Omega \setminus {\rm Pole}(\phi)$. 
	If the twisted H\"ormander condition is satisfied for $(\Omega, \phi)$, 
	$\phi$ is plurisubharmonic. 
\end{theorem}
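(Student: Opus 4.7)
The plan is to derive from the twisted H\"ormander condition a quantitative $L^2$-extension statement at each point $p\in\Omega\setminus\mathrm{Pole}(\phi)$ (an analogue of Definition \ref{def:ohsawa-takegoshi-condition}), and then to pass from that to plurisubharmonicity of $\phi$ via weighted Bergman kernel asymptotics, in the spirit of Berndtsson \cite{Ber} and Deng--Wang--Zhang--Zhou \cite{DWZZ}.

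For fixed $p\in\Omega\setminus\mathrm{Pole}(\phi)$ and $m\in\mathbb{N}$, I would choose a cutoff $\chi\in C_c^\infty(B(p,r_m))$ with $\chi\equiv 1$ on $B(p,r_m/2)$, together with a smooth strictly plurisubharmonic weight approximating $2n\log|z-p|$ near $p$, for instance $\psi_\epsilon(z):=n\log(|z-p|^2+\epsilon^2)+|z|^2$. Applying the twisted H\"ormander condition to $\alpha=\dbar\chi$ with weight $m\phi+\psi_\epsilon$ and letting $\epsilon\to 0$ along a weak-$L^2$ subsequence yields a smooth $u_m$ with $\dbar u_m=\dbar\chi$ whose limiting norm is finite against $|z-p|^{-2n}e^{-m\phi}$ near $p$. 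This integrability forces $u_m(p)=0$, so $f_m:=\chi-u_m$ is holomorphic with $f_m(p)=1$.

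The H\"ormander estimate on the annulus $\{r_m/2\le|z-p|\le r_m\}$, where $\dbar\chi\neq 0$ and both $\psi_\epsilon$ and $|\dbar\chi|^2_{\sqrt{-1}\ddbar\psi_\epsilon}$ are uniformly bounded, gives $\|f_m\|_{m\phi}^2\le C_m\,e^{-m\phi(p)}$ once one uses the local H\"older bound $-m\phi(z)+m\phi(p)\le Cm r_m^\gamma$ on $B(p,r_m)$ and chooses $r_m\to 0$ with $mr_m^\gamma\to 0$; the resulting $C_m$ grows polynomially in $m$, so $\log C_m/m\to 0$. Consequently the diagonal weighted Bergman kernel $B_{m\phi}(z):=\sup\{|f(z)|^2:\|f\|_{m\phi}\le 1\}$ satisfies $B_{m\phi}(p)\ge e^{m\phi(p)}/C_m$, while the submean inequality applied to an extremizer gives the matching upper bound $B_{m\phi}(z)\le C\,e^{m\sup_{B(z,s)}\phi}$. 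Since $u_m:=\tfrac{1}{m}\log B_{m\phi}$ is plurisubharmonic on $\Omega$, and these two bounds squeeze $u_m$ to $\phi$ at each continuity point, the upper-semicontinuous regularization $\phi^\ast:=(\limsup_m u_m)^\ast$ is plurisubharmonic on $\Omega$ and equals $\phi$ on $\Omega\setminus\mathrm{Pole}(\phi)$.

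The main obstacle I anticipate is twofold. First, carrying out the singular-weight approximation cleanly: one must check that the twist norm $|\dbar\chi|^2_{\sqrt{-1}\ddbar\psi_\epsilon}$ stays uniformly bounded as $\epsilon\to 0$, that the solutions $u_\epsilon$ converge strongly enough to produce a smooth limit with $u_m(p)=0$, and that the H\"older balancing really yields $\log C_m/m\to 0$ uniformly on compact subsets of $\Omega\setminus\mathrm{Pole}(\phi)$. Second, extending plurisubharmonicity across the closed pole set: one has $\phi\le\phi^\ast$ everywhere with equality off $\mathrm{Pole}(\phi)$, so it remains to verify that $\phi(z)\to-\infty$ as $z$ approaches $\mathrm{Pole}(\phi)$ from outside. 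I would argue this by contradiction, using that if $\phi$ stayed bounded below along some sequence converging to a pole, the extension construction applied along that sequence would produce $f_m$ with comparable mass at the limit point, contradicting $\phi=-\infty$ there; alternatively one may invoke a removable-singularity statement for plurisubharmonic functions applied to the closed, H\"older-controlled set $\mathrm{Pole}(\phi)$.
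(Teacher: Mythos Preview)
Your proposal is correct and follows essentially the same route as the paper: produce an $L^2$-extension at each point by applying the twisted H\"ormander condition with a smooth weight approximating $2n\log|z-p|$, pass to the singular limit so that the solution vanishes at $p$, and use the H\"older modulus to balance the cutoff radius against $m$ so that $\log C_m/m\to 0$. The paper uses the two-parameter weight $\psi_{\epsilon,\delta}=\log(|z-w|^2+\epsilon^2)+n\log(|z-w|^2+\delta^2)$ with the cutoff taken at the same scale $\epsilon$ (so that the first term supplies curvature $\sim\epsilon^{-2}\omega$ on the support of $\dbar\chi$), then cites \cite{DWZZ} directly rather than redoing the Bergman-kernel argument; your one-parameter weight $n\log(|z-p|^2+\epsilon^2)+|z|^2$ with an independent cutoff radius $r_m$ works just as well and leads to the same polynomial $C_m$.

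Your second anticipated obstacle is a non-issue and you should not spend effort on it. The twisted H\"ormander condition (Definition~\ref{def:hormander-condition}) already requires $\phi:\Omega\to[-\infty,+\infty)$ to be upper semi-continuous on all of $\Omega$, and by definition $\phi\equiv-\infty$ on $\mathrm{Pole}(\phi)$. Hence once $\phi$ is plurisubharmonic on $\Omega\setminus\mathrm{Pole}(\phi)$, the submean inequality at any pole point is trivially $\phi(p)=-\infty\le\text{average}$, and upper semi-continuity on $\Omega$ is given; so $\phi$ is plurisubharmonic on $\Omega$ without any further argument. The paper dispatches this in one line.
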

Here we consider a {\it twisted} version of H\"ormander conditions.
Twisting with an additional weight $\psi$ enables us to prove the multiple $L^2$-extension property and thus the plurisubharmonicity of $\phi$.

We shall also prove a partial converse of H\"ormander's $L^2$-estimate for vector bundles (see Theorem \ref{thm:conversevectorbundle}). 
We also introduce several positivity notions for vector bundles. 
To be precise, we study a singular Hermitian metric which is positive in the sense of twisted H\"ormander (see Definition \ref{def:hormanderpositive}) and a singular Hermitian metric 
which is positive in the sense of weak Ohsawa-Takegoshi (see Definition \ref{def:otpositive}). 

It is important to consider the sheaf of square integrable holomorphic sections of vector bundles with respect to positively curved singular Hermitian metrics.
For line bundles, such a sheaf is called {\it the multiplier ideal sheaf}. Multiplier ideal sheaves are proved to be coherent by $L^2$-estimates.
For general vector bundles, little is known about the coherence of such sheaves, because of the lack of $L^2$-estimates for general singular Hermitian metrics.
The first author has proved the coherence of such sheaves for singular Hermitian metrics induced by holomorphic sections \cite[Theorem 1.1]{Hos}. 
We prove that the sheaf of locally square integrable holomorphic sections with respect a metric which is positive in the sense of twisted H\"ormander is coherent. 
\begin{theorem}\label{mainthm:coherent}
Let $(E, h)$ be a positively curved metric in the sense of twisted H\"ormander. Assume that $|u|_{h^\star}$ is upper semi-continuous for any local holomorphic section $u\in \mathscr{O}(E^\star)$.
Then $\mathscr{E}(h)$ is a coherent subsheaf of $\mathscr{O}(E)$, 
where $\mathscr{E}(h)$ is the sheaf of locally square integrable holomorphic sections of $E$ with respect to $h$. 
\end{theorem}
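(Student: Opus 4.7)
The plan is to adapt Nadel's coherence argument for multiplier ideal sheaves to the vector bundle setting, using the twisted H\"ormander positivity of $(E,h)$ (Definition \ref{def:hormanderpositive}) in place of the classical H\"ormander $L^2$-estimate. Since coherence is a local property, fix $p_0 \in X$ and a polydisc neighborhood $\Omega$ on which $E$ is holomorphically trivial. Let $\mathcal H \subset H^0(\Omega, E)$ denote the Hilbert space of global holomorphic sections of $E|_{\Omega}$ that are square integrable against $h$, and let $\mathscr F \subseteq \mathscr E(h)$ be the subsheaf of $\mathscr O(E)$ generated by $\mathcal H$. The strong Noetherian property of coherent subsheaves of $\mathscr O(E)$ ensures that on every relatively compact open $\Omega' \Subset \Omega$, the sheaf $\mathscr F|_{\Omega'}$ is generated by a finite subfamily of $\mathcal H$ and hence is coherent. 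It therefore suffices to show $\mathscr F_{p_0} = \mathscr E(h)_{p_0}$, and by Krull's intersection theorem applied to the Noetherian ring $\mathscr O_{X,p_0}$ this reduces to proving: for every $u \in \mathscr E(h)_{p_0}$ and every integer $N \geq 1$, there is some $s \in \mathcal H$ with $s_{p_0} - u \in \mathfrak m_{p_0}^{N} \mathscr O(E)_{p_0}$.

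To construct such an $s$, represent $u$ by an $L^2$-section on a small ball, multiply by a cut-off $\chi$ equal to $1$ near $p_0$ and supported in $\Omega$ to form a smooth section $\tilde u$ of $E$ on $\Omega$, and set $\alpha = \dbar \tilde u$. This $\alpha$ is smooth, $\dbar$-closed, $E$-valued, compactly supported in the shell $\{\dbar\chi \neq 0\}$, and in particular vanishes near $p_0$. To force solutions to vanish at $p_0$, introduce for each $\epsilon > 0$ the smooth strictly plurisubharmonic weight
\[
\psi_\epsilon(z) = (N+n)\log(|z-p_0|^2 + \epsilon) + M |z|^2
\]
on $\Omega$, with $M$ chosen so large that $\sqrt{-1}\ddbar \psi_\epsilon \geq M \sqrt{-1}\partial\bar\partial |z|^2$ uniformly in $\epsilon$. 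Apply the twisted H\"ormander condition for $(E,h)$ to the pair $(\alpha,\psi_\epsilon)$ to obtain a solution $v_\epsilon$ of $\dbar v_\epsilon = \alpha$ satisfying
\[
\int_\Omega |v_\epsilon|_h^2\, e^{-\psi_\epsilon}\, d\lambda \leq \int_\Omega |\alpha|^2_{\sqrt{-1}\ddbar\psi_\epsilon,\,h}\, e^{-\psi_\epsilon}\, d\lambda,
\]
whose right-hand side is uniformly bounded in $\epsilon$ because $\alpha$ is supported away from $p_0$. Setting $s_\epsilon := \tilde u - v_\epsilon$ produces a family of global holomorphic sections of $E$ over $\Omega$ that are $L^2$ with respect to $h$.

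A weak compactness argument together with Montel's theorem extracts a holomorphic limit $s \in \mathcal H$ of a subsequence of $\{s_\epsilon\}$, and Fatou's lemma then gives
\[
\int_{\Omega} |s - \tilde u|_h^2\, \frac{e^{-M|z|^2}}{|z-p_0|^{2(N+n)}}\, d\lambda < +\infty.
\]
The regularity hypothesis on $|\cdot|_{h^\star}$ enters at this point: in a local holomorphic frame of $E^\star$ near $p_0$ the upper semi-continuous functions $|e_j^\star|_{h^\star}$ are locally bounded above, so duality produces a pointwise lower bound $|s-\tilde u|_h \geq C |s-\tilde u|_{\mathrm{frame}}$ on a neighborhood of $p_0$, where $|\cdot|_{\mathrm{frame}}$ denotes the componentwise $\ell^2$-norm in a chosen holomorphic frame of $E$. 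Consequently every component of $s-\tilde u$ in the frame is square integrable against $|z-p_0|^{-2(N+n)}$, which by a standard Taylor expansion argument forces vanishing at $p_0$ to order strictly greater than $N$; since $\tilde u = u$ near $p_0$, this yields the required inclusion $s_{p_0} - u \in \mathfrak m_{p_0}^{N+1} \mathscr O(E)_{p_0}$.

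The main technical difficulty is that the twisted H\"ormander condition demands a \emph{smooth} strictly plurisubharmonic weight, so a direct logarithmic pole at $p_0$ is unavailable; the regularization $\psi_\epsilon$ and the subsequent weak limit are needed to recover the concentration effect while keeping the right-hand side of the estimate uniformly bounded. A second delicate point is the conversion of a weighted $L^2$-bound measured with the possibly singular metric $h$ into honest pointwise vanishing of $s - u$ at $p_0$, and it is precisely the upper semi-continuity assumption on $|\cdot|_{h^\star}$ that makes this step legitimate.
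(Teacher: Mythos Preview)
Your proposal is correct and follows essentially the same approach as the paper: both localize and trivialize $E$, generate a coherent subsheaf from the global $L^2$-sections via the strong Noetherian property, solve $\dbar$ against regularized logarithmic weights $(n+k)\log(|z-p_0|^2+\delta)+|z|^2$ to match a given germ modulo high powers of $\mathfrak m_{p_0}$, use the upper semi-continuity of $|\cdot|_{h^\star}$ to pass from an $h$-weighted bound to componentwise vanishing, and conclude equality of stalks via Krull/Artin--Rees--Nakayama. The only cosmetic discrepancy is that Definition~\ref{def:hormandervector} is stated for $(n,1)$-forms, so you should replace $\tilde u$ by $\tilde u\,dz$ when invoking the twisted H\"ormander condition.
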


We also study metrics which is positive in the sense of weak Ohsawa-Takegoshi. We show that the positivity in this sense is strictly weaker than the Nakano semi-positivity. 

\begin{theorem}\label{mainthm:notnakanopositive}
There exists a positively curved vector bundle $(E, h)$ in the sense of weak Ohsawa-Takegoshi such that 
$(E, h)$ is not Nakano semi-positive. 
\end{theorem}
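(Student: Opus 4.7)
The plan is to exhibit a concrete smooth Hermitian vector bundle $(E,h)$ on a small domain $\Omega \subset \mathbb{C}^n$ ($n \geq 2$) which is strictly Griffiths positive but fails Nakano semi-positivity at some point, and then to verify that such a bundle automatically satisfies the weak Ohsawa-Takegoshi property of Definition~\ref{def:otpositive}. The underlying principle is that Griffiths positivity is still enough for $L^2$-extension after a Demailly--Skoda twist, even though it is classically known to be strictly weaker than Nakano positivity.

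For Step 1 (construction of the counter-example for Nakano positivity), I would take $\Omega$ to be a small neighborhood of the origin in $\mathbb{C}^2$ and $E = \Omega \times \mathbb{C}^2$ the trivial rank-$2$ bundle, endowed with a smooth Hermitian metric represented by a positive-definite matrix $H(z) = I_2 + A(z)$ where $A$ is a carefully chosen polynomial perturbation in $(z, \bar z)$. A standard calculation then gives the Chern curvature at the origin in closed form, and one checks that with an appropriate choice of $A$ the resulting curvature is strictly Griffiths positive at $0$ while admitting a nonzero tensor $u \in T_0 \Omega \otimes E_0$ with $\langle \sqrt{-1} \Theta_{(E,h)} u, u \rangle \leq 0$, i.e.\ it fails Nakano semi-positivity. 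Such examples are classical in complex geometry (going back to the early study of the difference between Griffiths and Nakano positivity).

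For Step 2 (verification of weak Ohsawa-Takegoshi positivity), the strategy is to use the Demailly--Skoda theorem, which implies that if $(E,h)$ is strictly Griffiths positive then $(E \otimes \det E, h \otimes \det h)$ is (strictly) Nakano semi-positive. Shrinking $\Omega$ if necessary so that $\det E$ is holomorphically trivial on $\Omega$, fix a nowhere-vanishing section $\sigma$ of $\det E$. For a given point $p \in \Omega$ and vector $v \in E_p$, apply the classical Ohsawa--Takegoshi extension theorem to $(E \otimes \det E, h \otimes \det h)$ at $p$ with the prescribed value $v \otimes \sigma(p)$; this yields a holomorphic extension $\tilde s \in H^0(\Omega, E \otimes \det E)$ with the standard $L^2$-bound. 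Tensoring with $\sigma^{-1}$ produces a holomorphic extension $s \in H^0(\Omega, E)$ of $v$, and the $L^2$-norm of $s$ with respect to $h$ is controlled by the $L^2$-norm of $\tilde s$ with respect to $h \otimes \det h$ multiplied by the supremum of $|\sigma^{-1}|$ over $\Omega$, which is finite. This is precisely the form of estimate needed for the weak Ohsawa-Takegoshi property. Combined with Step 1 one obtains the desired example.

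The main obstacle is bookkeeping in Step 2: one must match the bound coming from classical OT composed with multiplication by $\sigma^{-1}$ to the precise form of the $L^2$-bound demanded in Definition~\ref{def:otpositive}. If the definition requires the estimating constant to depend only on $h$ at $p$ and on the geometry of $\Omega$ in a uniform way, then some care is needed either to shrink $\Omega$ enough that $|\sigma^{-1}|$ is controlled in terms of $h$, or to choose a canonical $\sigma$ (for instance using a minimal $L^2$-extension of $\det h$ itself). A secondary, routine obstacle is the explicit curvature computation in Step 1; this is standard but must be carried out with enough care to exhibit both the positivity and the failure simultaneously at the same point.
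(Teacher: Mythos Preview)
Your Step~2 does not establish the weak Ohsawa--Takegoshi property as defined in the paper. Definition~\ref{def:otpositive} requires the \emph{multiple} $L^2$-extension property (Definition~\ref{def:multipleextensionproperty}): for every $m\ge 1$ one must extend $a^{\otimes m}$ to a holomorphic section of $E^{\otimes m}$ with bound $C_m|a|_h^{2m}$, and the constants must satisfy $\frac{1}{m}\log C_m\to 0$. Your argument treats only $m=1$. If one runs the same Demailly--Skoda trick for each $m$, the Nakano-positive twist is $E^{\otimes m}\otimes\det(E^{\otimes m})$ with $\det(E^{\otimes m})=(\det E)^{\otimes m r^{m-1}}$ (here $r$ is the rank of $E$); undoing this twist by $\sigma^{-mr^{m-1}}$ inserts the factor $\bigl(\sup_\Omega|\sigma|_{\det h}\big/\inf_\Omega|\sigma|_{\det h}\bigr)^{2mr^{m-1}}$ into $C_m$. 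Unless $\det h$ is flat this grows super-exponentially in $m$, so the growth condition fails, and shrinking $\Omega$ does not help since the oscillation of $|\sigma|_{\det h}$ stays positive on any neighbourhood where $\det h$ is curved. In effect your route would require proving that smooth Griffiths positivity already implies the multiple $L^2$-extension property, which is precisely part of the open Question~\ref{qus:griffithsnakano} rather than an input.

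The paper sidesteps this entirely. It first proves a quotient lemma (Lemma~\ref{lemma:quotient}): if $(E,h)$ has the multiple $L^2$-extension property then so does any Hermitian quotient $(Q,h_Q)$, by lifting $a\in Q_z$ isometrically to $b\in E_z$, extending $b^{\otimes m}$ in $E^{\otimes m}$ with the given constants $C_m$, and projecting back (the projection is norm-decreasing). Since the trivial flat bundle $(\underline{\mathbb{C}}^{\,n+1},h_0)$ obviously has the property with $C_m=\mathrm{Vol}(U)$ (take the constant extension $f_m\equiv a^{\otimes m}$), the universal quotient bundle $Q$ on $\mathbb{P}^n$ inherits it; and $Q$ is the classical example of a bundle that is not Nakano semi-positive. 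This produces the required $(E,h)$ without ever invoking the implication ``Griffiths $\Rightarrow$ weak OT''.
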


This is a partial answer to the question proposed by Deng, Wang, Zhang, and Zhou in \cite{DWZZ} (see Question \ref{qus:griffithsnakano} below).
We also propose some questions about the above new positivity notions. 

\section{A converse of H\"ormander's $L^2$ estimate for line bundles}\label{sec:horlinebundle}
In this section, we formulate a H\"ormander-type condition and prove the equivalence to plurisubharmonicity under some regularity assumption.

\begin{definition}\label{def:hormander-condition}
	Let $\Omega \subset \mathbb{C}^n$ be a domain and $\phi : \Omega \to [-\infty, +\infty)$ be an upper semi-continuous function.
	We say that $(\Omega, \phi)$ satisfies {\it the twisted H\"ormander condition} if, for every positive integer $m$, smooth strictly plurisubharmonic function $\psi$ on $\Omega$, and smooth $\dbar$-closed $(0,1)$-form $\alpha$ with compact support and finite norm $\int_\Omega |\alpha|^2_{\sqrt{-1}\ddbar\psi} e^{-(m\phi + \psi)}<+\infty$, there exists a smooth function $u$ such that
	\begin{itemize}
		\item $\dbar u = \alpha$, and
		\item $\displaystyle\int_\Omega |u|^2 e^{-(m\phi + \psi)}  \leq \displaystyle\int_\Omega |\alpha|^2_{\sqrt{-1}\ddbar\psi} e^{-(m\phi + \psi)}.$
	\end{itemize}
\end{definition}

\begin{remark}
(1) To formulate the condition, we used an additional weight function $\psi$. This enables us to prove the multiple $L^2$-extension property (under some regularity assumption) and therefore the plurisubharmonicity of $\phi$.

(2) We clearly see that, on a domain in $\mathbb{C}^n$, a upper semi-continuous function $\phi$ is plurisubharmonic if and only if for every smooth strictly plurisubharmonic function $\psi$, $\phi + \psi$ is plurisubharmonic. 
Hence, it is worth considering a twisted version of H\"ormander's $L^2$ estimate. 

(3) If $\phi$ is a plurisubharmonic, the twisted H\"ormander condition is automatically satisfied. Indeed, the standard estimate gives the following:
$$\int_\Omega |u|^2 e^{-(m\phi + \psi)} \leq \int_\Omega |\alpha|^2_{\sqrt{-1}\ddbar (m\phi + \psi)} e^{-(m\phi + \psi)}. $$
Since $\phi$ is plurisubharmonic, we have that $\sqrt{-1}\ddbar (m\phi + \psi) \geq \sqrt{-1} \ddbar\psi$ (in the sense of currents). Then we have that $|\cdot|^2_{\sqrt{-1}\ddbar (m\phi + \psi)} \leq |\cdot|^2_{\sqrt{-1}\ddbar \psi}$ and thus $$\int_\Omega |u|^2 e^{-(m\phi + \psi)} \leq \int_\Omega |\alpha|^2_{\sqrt{-1}\ddbar (m\phi + \psi)} e^{-(m\phi + \psi)} \leq \int_\Omega |\alpha|^2_{\sqrt{-1}\ddbar \psi} e^{-(m\phi + \psi)}.$$

(4) One may formulate, as in Definition \ref{def:ohsawa-takegoshi-condition}, the twisted H\"ormander condition with constants $C_m$ and the same growth condition $\lim_{m\to \infty} \frac{\log C_m}{m} =0$.
\end{remark}

We will show that, under some continuity assumption, the H\"ormander condition implies the multiple $L^2$-extension property. 
We set ${\rm Pole}(\phi):=\{ \phi^{-1}(-\infty)\}$, the set of poles of $\phi$.

\begin{theorem}\label{thm:hormander-converse}{\rm (= Theorem \ref{mainthm:hormander-converse})}
	Let $\Omega \subset \mathbb{C}^n$ be a domain. Assume that ${\rm Pole}(\phi)$ is closed and $\phi$ is a locally H\"older continuous function on $\Omega \setminus {\rm Pole}(\phi)$, i.e.\ for every $\Omega' \Subset \Omega \setminus {\rm Pole}(\phi)$, there exist constants $\alpha = \alpha_{\Omega'} \in (0,1]$ and $C = C_{\Omega'}>0$ such that $|\phi(z) - \phi(w)| \leq C |z-w|^\alpha$ for every $z,w \in \Omega'$.
	If the twisted H\"ormander condition (Definition \ref{def:hormander-condition}) is satisfied for $(\Omega, \phi)$, 
	$\phi$ is plurisubharmonic.
\end{theorem}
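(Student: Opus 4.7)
The plan is to deduce the multiple $L^2$-extension property of Definition \ref{def:ohsawa-takegoshi-condition} from the twisted H\"ormander condition; by the characterization in \cite{DWZZ}, plurisubharmonicity of $\phi$ then follows at once. Fix $p \in \Omega$ with $\phi(p) \neq -\infty$. Since ${\rm Pole}(\phi)$ is closed, $\phi$ is H\"older continuous on some ball $B(p, r_0) \Subset \Omega \setminus {\rm Pole}(\phi)$ with exponent $\alpha$ and constant $C_0$, and because plurisubharmonicity is local I may assume $\Omega$ itself is bounded. For each $m$ I plan to produce a holomorphic extension $f_m = \chi_m - u_m$, where $\chi_m$ is a cut-off with $\chi_m(p) = 1$ and $u_m$ is obtained from the twisted H\"ormander condition applied to $\alpha = \dbar \chi_m$ with a weight engineered to force $u_m(p) = 0$.

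Concretely, set $r_m := m^{-1/\alpha}$ and take $\chi_m \in C_c^\infty(B(p, r_m))$ with $\chi_m \equiv 1$ on $B(p, r_m/2)$ and $|\dbar\chi_m| \lesssim r_m^{-1}$. The natural weight forcing vanishing at $p$ is singular like $(n+1)\log|z-p|^2$, which is not admissible, so I approximate by the smooth strictly plurisubharmonic family
\[
\psi_j := (n+1)\log(|z-p|^2 + \epsilon_j^2) + |z|^2, \qquad \epsilon_j \downarrow 0,
\]
which satisfies $\sqrt{-1}\ddbar \psi_j \geq \sqrt{-1}\ddbar |z|^2$. The twisted H\"ormander condition then yields $u_{m,j}$ with $\dbar u_{m,j} = \dbar \chi_m$ and
\[
\int_\Omega |u_{m,j}|^2 e^{-(m\phi + \psi_j)} \,\leq\, \int_\Omega |\dbar \chi_m|^2 e^{-(m\phi + \psi_j)}.
\]
On the annulus $\{r_m/2 \leq |z-p| \leq r_m\}$ supporting $\dbar\chi_m$, $\psi_j$ is bounded uniformly in $j$, so by H\"older continuity the right-hand side is at most $C r_m^{-4} e^{-m\phi(p) + m C_0 r_m^\alpha}$ uniformly in $j$. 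Extracting a weak $L^2_{\rm loc}(\Omega \setminus \{p\})$-limit $u_m$ of $u_{m,j}$ and invoking Fatou gives $\int_\Omega |u_m|^2 e^{-(m\phi + \psi_\infty)} < \infty$ with $\psi_\infty := (n+1)\log|z-p|^2 + |z|^2$. Near $p$, $u_m$ is holomorphic and $e^{-\psi_\infty} \asymp |z-p|^{-2(n+1)}$, so this integrability forces $u_m(p) = 0$; hence $f_m := \chi_m - u_m \in \mathcal{O}(\Omega)$ with $f_m(p) = 1$.

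For the $L^2$-bound, split $\int_\Omega |f_m|^2 e^{-m\phi} \leq 2\int |\chi_m|^2 e^{-m\phi} + 2\int |u_m|^2 e^{-m\phi}$. The H\"older estimate $\phi \geq \phi(p) - C_0 r_m^\alpha$ on $B(p, r_m)$ gives the $\chi_m$-term at most $C r_m^{2n} e^{-m\phi(p) + m C_0 r_m^\alpha}$, while the $u_m$-term is controlled via $e^{-m\phi} = e^{-(m\phi+\psi_\infty)} e^{\psi_\infty}$ and boundedness of $\psi_\infty$ on the bounded $\Omega$, giving at most $C r_m^{-4} e^{-m\phi(p) + m C_0 r_m^\alpha}$. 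With $r_m = m^{-1/\alpha}$ the exponential correction $e^{m C_0 r_m^\alpha}$ collapses to the constant $e^{C_0}$ and the polynomial factor is $r_m^{-4} = m^{4/\alpha}$, so $C_m = O(m^{4/\alpha})$; in particular $\log C_m / m \to 0$, which is the multiple $L^2$-extension property.

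The main obstacle is the incompatibility between the smoothness constraint on $\psi$ in Definition \ref{def:hormander-condition} and the singular weight needed to enforce $u_m(p) = 0$: the smoothing family $\psi_j$ together with the weak-limit argument bridges this gap, but one must verify that the uniform H\"ormander estimates and the local boundedness of weights away from $p$ really do produce a limit that solves the $\dbar$-equation with the required integrability. The H\"older hypothesis is essential here: it is precisely what lets $r_m \to 0$ while keeping $m r_m^\alpha$ bounded, so that $C_m$ grows only polynomially in $m$; a weaker modulus of continuity would destroy the growth condition $\log C_m / m \to 0$ and break the reduction to \cite{DWZZ}.
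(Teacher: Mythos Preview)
Your argument is correct and follows the paper's approach: solve $\dbar u=\dbar\chi$ against a smoothed singular weight, let the smoothing parameter tend to zero so the limiting weight forces $u(p)=0$, set $f=\chi-u$, and choose the cutoff scale $\sim m^{-1/\alpha}$ so that the H\"older bound yields only polynomial growth of $C_m$, then invoke \cite{DWZZ}. The only differences are cosmetic: the paper uses the two-parameter weight $\psi_{\epsilon,\delta}=\log(|z-w|^2+\epsilon^2)+n\log(|z-w|^2+\delta^2)$ and exploits the curvature of the first term (of order $\epsilon^{-2}\omega$ on the support of $\dbar\chi$) to obtain $C_m\asymp m^{2/\alpha}$ rather than your $m^{4/\alpha}$, and it fixes $\Omega'\Subset\Omega\setminus\mathrm{Pole}(\phi)$ at the outset so that the H\"older data---and hence $C_m$---is manifestly uniform in the basepoint, a point you should make explicit since Definition~\ref{def:ohsawa-takegoshi-condition} requires $C_m$ independent of $p$.
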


\begin{proof}
Fix a domain $\Omega' \Subset \Omega \setminus {\rm Pole}(\phi)$.
We will show that $(\Omega', \phi)$ satisfies the multiple $L^2$-extension property (Definition \ref{def:ohsawa-takegoshi-condition}). 

Fix a point $w \in \Omega'$ with $\phi(w)>-\infty $ and an integer $m>0$. We will construct a holomorphic function $f \in \Omega'$ such that $f(w) = 1$ and
$$ \int_{\Omega'} |f|^2 e^{-m\phi} \leq C_m e^{-m\phi(w)},$$
where $C_m$ is a constant independent of the choice of $w \in \Omega'$.

Take $\chi = \chi(t)$ be a smooth function on $\mathbb{R}$, such that
\begin{itemize}
	\item $\chi(t) = 1$ for $t \leq 1/2$,
	\item $\chi(t) = 0$ for $t \geq 1$, and
	\item $|\chi'(t)|\leq 3$ on $\mathbb{R}$.
\end{itemize}
Define a (0,1)-form $\alpha$ by
\begin{align*}
\alpha &:= \dbar \chi\left({|z-w|^2 \over \epsilon^2 } \right)= \chi'\left(\frac{|z-w|^2}{\epsilon^2} \right) \sum_j \frac{z_j-w_j}{\epsilon^2} d\overline{z}_j.
\end{align*}
We apply the twisted H\"ormander condition for the weight
\begin{equation*}
\psi_{\epsilon, \delta} := \log(|z-w|^2 + \epsilon^2) + n \log(|z-w|^2 + \delta^2),
\end{equation*}
where $\epsilon$ and $\delta$ are positive parameters. 

Then we obtain a smooth function $u_{\epsilon, \delta}$ on $\Omega'$ such that
\begin{itemize}
	\item $\dbar u_{\epsilon, \delta} = \alpha$ and
	\item \begin{equation}
	\int_{\Omega'} |u_{\epsilon, \delta}|^2 e^{-(m\phi + \psi_{\epsilon, \delta})} \leq \int_{\Omega'} |\alpha|^2_{\sqrt{-1}\ddbar \psi_{\epsilon, \delta}} e^{-(m\phi+ \psi_{\epsilon, \delta})}.\label{eqn:dbar-est}
	\end{equation}
\end{itemize}
Since
\begin{equation*}
|\alpha|^2_{\sqrt{-1}\ddbar \psi_{\epsilon, \delta}} = \left| \chi' \left(\frac{|z-w|^2}{\epsilon^2} \right) \right|^2 \cdot \frac{1}{\epsilon^4} \cdot \left|\sum_j (z_j-w_j) d\overline{z}_j \right|^2_{\sqrt{-1}\ddbar \psi_{\epsilon, \delta}}
\end{equation*}
and the support of $\chi'\left(\frac{|z-w|^2}{\epsilon^2} \right)$ is in $\{1/2 \leq |z-w|^2 /\epsilon^2 \leq 1 \}$, we have that
\begin{align}
(\text{RHS of (\ref{eqn:dbar-est})}) &= \int_{\{1/2 \leq |z-w|^2/\epsilon^2 \leq 1 \}} \left| \chi' \left(\frac{|z-w|^2}{\epsilon^2} \right) \right|^2 \cdot \frac{1}{\epsilon^4} \cdot \left|\sum_j (z_j-w_j) d\overline{z}_j \right|^2_{\sqrt{-1}\ddbar \psi_{\epsilon, \delta}} e^{-(m\phi + \psi_{\epsilon, \delta})}\\
& \leq \frac{9}{\epsilon^4} \int_{\{\epsilon^2 / 2 \leq |z-w|^2 \leq \epsilon^2\}} \left|\sum_j (z_j-w_j) d\overline{z}_j \right|^2_{\sqrt{-1}\ddbar \psi_{\epsilon, \delta}} e^{-(m\phi + \psi_{\epsilon, \delta})}.\label{eqn:est-1}
\end{align}
Letting $\omega := i\ddbar |z|^2$, we have that 
\begin{equation*}
\sqrt{-1}\ddbar \psi_{\epsilon, \delta} \geq \sqrt{-1}\ddbar \log(|z-w|^2+\epsilon^2) \geq \frac{\epsilon^2}{(|z-w|^2+\epsilon^2)^2} \omega,
\end{equation*}
and thus 
\begin{equation}
|\cdot|^2_{\sqrt{-1}\ddbar \psi_{\epsilon, \delta}} \leq |\cdot|^2_{(\epsilon^2/(|z-w|^2+\epsilon^2)^2)\omega}.\label{eqn:norm_est}
\end{equation}
Combining (\ref{eqn:est-1}) and (\ref{eqn:norm_est}), we have the following:
\begin{align*}
(\ref{eqn:est-1}) &\leq \frac{9}{\epsilon^4} \int_{\{\epsilon^2 / 2 \leq |z-w|^2 \leq \epsilon^2\}} |z-w|^2 \frac{(|z-w|^2 + \epsilon^2)^2}{\epsilon^2} e^{-(m\phi + \psi_{\epsilon, \delta})}\\
&= \frac{9}{\epsilon^4} \int_{\{\epsilon^2 / 2 \leq |z-w|^2 \leq \epsilon^2\}} |z-w|^2 \frac{(|z-w|^2 + \epsilon^2)^2}{\epsilon^2} \frac{1}{|z-w|^2 + \epsilon^2}\frac{1}{(|z-w|^2 + \delta^2)^n}e^{-m\phi}\\
&= \frac{9}{\epsilon^4} \int_{\{\epsilon^2 / 2 \leq |z-w|^2 \leq \epsilon^2\}} |z-w|^2 \frac{|z-w|^2 + \epsilon^2}{\epsilon^2} \frac{1}{(|z-w|^2 + \delta^2)^n}e^{-m\phi}\\
& \leq \frac{9}{\epsilon^4} {\rm Vol}(\{\epsilon^2 / 2 \leq |z-w|^2 \leq \epsilon^2\}) \frac{\epsilon^2(\epsilon^2 + \epsilon^2)}{\epsilon^2} \frac{1}{(\epsilon^2/2 + \delta^2)^n} e^{-m\inf_{B(w,\epsilon)} \phi}\\
&= \frac{9}{\epsilon^4} C_n \epsilon^{2n} \frac{\epsilon^2(\epsilon^2 + \epsilon^2)}{\epsilon^2} \frac{1}{(\epsilon^2/2 + \delta^2)^n} e^{-m\inf_{B(w,\epsilon)} \phi}\\
&= 9C_n \epsilon^{2n-2} \frac{1}{(\epsilon^2/2 + \delta^2)^n}e^{-m\inf_{B(w,\epsilon)} \phi}.
\end{align*}

To summarize, we have obtained a smooth function $u_{\epsilon, \delta}$ on $\Omega'$ such that
\begin{itemize}
	\item $\dbar u_{\epsilon, \delta} = \alpha$, and
	\item the following estimate holds:
	\begin{equation}
	\int_{\Omega'} |u_{\epsilon, \delta}|^2 e^{-(m\phi + \psi_{\epsilon, \delta})} \leq 9C_n \epsilon^{2n-2} \frac{1}{(\epsilon^2/2 + \delta^2)^n}e^{-m\inf_{B(w,\epsilon)} \phi}.\label{eqn:matome}
	\end{equation}
\end{itemize}

Let $\delta \to 0$. The right-hand side of (\ref{eqn:matome}) is increasing to 
\begin{equation*}
9C_n \epsilon^{2n-2} \frac{2^n}{\epsilon^{2n}}e^{-m\inf_{B(w,\epsilon)} \phi} = 9C_n \frac{2^n}{\epsilon^2} e^{-m\inf_{B(w,\epsilon)} \phi}.
\end{equation*}
Let us show the convergence of $u_{\epsilon, \delta}$.
We have that
$$\int_{\Omega'} |u_{\epsilon, \delta}|^2 e^{-(m\phi + \psi_{\epsilon, \delta})} \leq 9C_n \frac{2^n}{\epsilon^2} e^{-m \inf \phi}.  $$
Note that the weight function $\psi_{\epsilon, \delta}$ is decreasing when $\delta \to 0$. Therefore $e^{-\psi_{\epsilon, \delta}}$ is increasing when $\delta \to 0$.

Fix $\delta_0>0$. Then, for $\delta < \delta_0$, (since $e^{-\psi_{\epsilon, \delta}} > e^{-\psi_{\epsilon, \delta_0}} $) we have that
$$\int_{\Omega'} |u_{\epsilon, \delta}|^2 e^{-(m\phi + \psi_{\epsilon, \delta_0})} \leq \int_{\Omega'} |u_{\epsilon, \delta}|^2 e^{-(m\phi + \psi_{\epsilon, \delta})} \leq 9C_n \frac{2^n}{\epsilon^2} e^{-m \inf \phi}.  $$
Thus $\{u_{\epsilon, \delta}\}_{\delta < \delta_0}$ forms a bounded sequence in $L^2(\Omega', e^{-(m\phi + \delta_0)})$. We can choose a weakly convergent sequence $\{u_{\epsilon, \delta^{(k)}}\}_k$ in $L^2(\Omega', e^{-(m\phi + \delta_0)})$. Since the $L^2$-norm is lower semi-continuous under weak limits, the (weak) limit function $u_\epsilon$ satisfies
$$\int_{\Omega'} |u_{\epsilon}|^2 e^{-(m\phi + \psi_{\epsilon, \delta_0})} \leq  9C_n \frac{2^n}{\epsilon^2} e^{-m \inf \phi}. $$
Next, fix $\delta_1 < \delta_0$. By the same argument, we can choose a subsequence of $\{u_{\epsilon, \delta^{(k)}}\}_k$ weakly convergent in $L^2(\Omega', e^{-(m\phi + \delta_1)})$.
Repeating this argument for a sequence $\{\delta_j\}$ decreasing to $0$ and taking a diagonal sequence, finally we can obtain a sequence $\{u_{\epsilon, \delta^{(k)}}\}_k$ weakly convergent in $L^2(\Omega', e^{-(m\phi + \psi_{\epsilon, \delta_j})})$ for every $j$. Then we have 
$$\int_{\Omega'} |u_{\epsilon}|^2 e^{-(m\phi + \psi_{\epsilon, \delta_j})} \leq  9C_n \frac{2^n}{\epsilon^2} e^{-m \inf \phi} $$
for every $j$ and, by the monotone convergence theorem,
$$\int_{\Omega'} |u_{\epsilon}|^2 e^{-(m\phi + \psi_{\epsilon})} \leq  9C_n \frac{2^n}{\epsilon^2} e^{-m \inf \phi}. $$

Since differential operators are continuous under weak limits, we have $\dbar u_{\epsilon} = \alpha.$

The integral
$$\int_{\Omega'} |u_\epsilon|^2 e^{-(m\phi + \psi_\epsilon)} = \int_{\Omega'} |u_\epsilon|^2 e^{-m\phi} \frac{1}{(|z-w|^2 + \epsilon^2)} \frac{1}{|z-w|^{2n}} $$
is finite while the weight $\frac{1}{|z-w|^{2n}}$ is not integrable near $w$, thus $u_\epsilon(w)$ must be $0$.

Let $f_\epsilon := \chi(|z-w|^2/\epsilon^2) - u_\epsilon$. Then $f_\epsilon(0) = 1$ and
\begin{equation}
\left(\int_{\Omega'} |f_\epsilon|^2 e^{-m\phi}\right)^{1/2} \leq \left(\int_{\Omega'} \left|\chi\left({|z-w|^2\over\epsilon^2}\right)\right|^2 e^{-m\phi}\right)^{1/2} + \left(\int_{\Omega'} |u_\epsilon|^2 e^{-m\phi}\right)^{1/2}.\label{eqn:triangle}
\end{equation}

We will estimate each term of the right-hand side of (\ref{eqn:triangle}).
Since $\chi \leq 1$ and the support of $\chi(|z-w|^2 / \epsilon^2) $ is contained in $\{|z-w|^2 \leq \epsilon^2 \}$, the first term can be bounded by
$$ e^{2n}e^{-m \inf_{B(w,\epsilon)} \phi}.$$
Next we consider the second term. We have that
\begin{align*}
\int_{\Omega'}|u|^2 e^{-m\phi} &\leq \left[\sup_{z \in \Omega'} |z-w|^{2n}(|z-w|^2+\epsilon)\right] \cdot \int_{\Omega'} |u|^2 e^{-m\phi } \frac{1}{(|z-w|^2 + \epsilon^2)} \frac{1}{|z-w|^{2n}} \\&\leq
\left[\sup_{z \in \Omega'} |z-w|^{2n}(|z-w|^2+\epsilon)\right] \cdot 9C_n \frac{2^n}{\epsilon^2} e^{-m\inf_{B(w,\epsilon)} \phi}.
\end{align*}
Assuming $\epsilon < 1$, we can bound the sup by $(R+1)^{2n+2}$, where $R$ is the radius of $\Omega'$.
Therefore the second term is bounded by
$ C \frac{1}{\epsilon^2} e^{-m\inf_{B(w,\epsilon)} \phi}. $

Therefore, we have that
\begin{equation*}
\int_{\Omega'} |f|^2 e^{-m\phi} \leq C' \left(\epsilon^n + \frac{1}{\epsilon}\right)^2 e^{-m \inf_{B(w,\epsilon)} \phi},
\end{equation*}
where $C'$ is a constant independent of $m$.

By the assumption that $\phi$ is locally H\"older continuous on $\Omega \setminus {\rm Pole}(\phi)$, we have that $|\phi(z) - \phi(w)| \leq C_{\Omega'}|z-w|^\alpha$ for every $z \in \Omega'$.
Let $\epsilon := 1/(mC_{\Omega'})^{1/\alpha}$. Then we have that $|m\phi(z) - m \phi(w)| \leq 1$ for $|z-w| \leq \epsilon$. Thus,
\begin{align}
\int_{\Omega'} |f|^2 e^{-m\phi} &\leq C'\left(\frac{1}{m^{n \over \alpha}C_{\Omega'}^{n \over \alpha}} +{m^{1 \over \alpha}C_{\Omega'}^{1 \over \alpha}}\right)^2 e^{-m \phi(w) + 1}\nonumber\\
&= C'e\cdot \left(\frac{1}{m^{n \over \alpha}C_{\Omega'}^{n \over \alpha}} +{m^{1 \over \alpha}C_{\Omega'}^{1 \over \alpha}}\right)^2e^{-m \phi(w)}.\label{eqn:last}
\end{align} 
The coefficient of (\ref{eqn:last}) satisfies the growth condition $\frac{\log C_m}{m} \to 0$, thus we have verified the multiple $L^2$-extension property for $(\Omega', \phi)$. 

Then, by \cite{DWZZ}, it follows that $\phi$ is plurisubhamronic on $\Omega \setminus {\rm Pole}(\phi)$. Since $\phi$ takes the value $-\infty$ on ${\rm Pole}(\phi)$, $\phi$ 
is also plurisubharmonic on $\Omega$. 
\end{proof}

\begin{remark}
	In this argument, we used the assumption of the regularity of $\phi$. 
	We do not know whether we can completely remove this kind of assumption. 
\end{remark}

\section{A converse of H\"ormander's $L^2$ estimate for vector bundles and relations to various positivity notions}\label{sec:vectorbundle}
\subsection{A converse of H\"ormander's $L^2$ estimate for vector bundles}
In this section, we prove a version of Theorem \ref{thm:hormander-converse} for vector bundles. 
First of all, we introduce the definition of singular Hermitian metrics on vector bundles. 
Throughout this section, $E\to \Omega$ denotes a holomorphic vector bundle over a domain $\Omega \subset \mathbb{C}^n$, 
$\omega$ denotes the standard K\"ahler metric on $\Omega$, $dV_\omega$ be the volume form determined by $\omega$, 
$h$ denotes a singular Hermitian metric on $E$, and $h^{\star}$ denotes the dual metric on 
the dual vector bundle $E^{\star}$.  

\begin{definition}(cf. \cite[Definition 17.1]{HPS}, \cite[Definition 1]{Rau1})
{\it A singular Hermitian metric} $h$ on $E$ is a 
measurable map from $\Omega$ to the space of non-negative Hermitian forms on the fibers, i.e. $h$ satisfies the following conditions:  
\begin{enumerate}
\item $h$ is finite and positive definite almost everywhere on each fiber. 
\item the function $|s|_h : U \to [0, +\infty]$ is measurable, where $U\subset \Omega$ is an open set and $s\in H^0(U, E)$. 
\end{enumerate}
\end{definition}
The Ohsawa-Takegoshi type condition, which is called the multiple $L^p$-extension property, for vector bundles is introduced in \cite{DWZZ}. The precise definition is as folllows. 

\begin{definition}[\cite{DWZZ}](Multiple $L^p$-extension property)\label{def:multipleextensionproperty}
Let $p>0$ be a fixed constant. Assume that for any $z\in \Omega$, any nonzero element $a\in E_z$ with finite norm $|a|_h<+\infty$, and any $m\ge 1$, there is a holomorphic 
section $f_m \in H^0(\Omega, E^{\otimes m})$ such that $f_m(z)=a^{\otimes m}$ and satisfies the following condition:
\begin{equation}
\int_{\Omega} |f_m|^p_{h^{\otimes m}} dV_\omega \le C_m |a^{\otimes m}|^p_{h^{\otimes m}} = C_m |a|^{mp}_{h},\label{eqn:l2extensionproperty}
\end{equation}
where $C_m$ are constants independent of $z\in \Omega$ and satisfy the growth condition $ \lim_{m\to \infty}\frac{1}{m}\log C_m = 0$. 
Then $(E, h)$ is said to have {\it the multiple $L^p$-extension property}. 
\end{definition}

In this paper, we only consider the multiple $L^2$-extension property. We also define the H\"ormander-type condition for vector bundles. 
\begin{definition}\label{def:hormandervector}
	We say that $(E, h)$ satisfies {\it the twisted H\"ormander condition} on $\Omega$ if, for every positive integer $m$, smooth strictly plurisubharmonic function $\psi$ on $\Omega$ and smooth $\dbar$-closed $E^{\otimes m}$-valued $(n,1)$-form $\alpha = \sum_{j} \alpha_j dz \wedge d\overline{z}_j ~(\alpha_j\text{ is a smooth section of }E^{\otimes m} )$ with compact support and finite norm $\int_\Omega \sum_{1\le i, j \le n}(\psi^{(i\overline{j})}\alpha_i, \alpha_j)_{h^{\otimes m}} e^{-\psi}dV_\omega<+\infty $, there exists a smooth $E^{\otimes m}$-valued $(n,0)$ form $u$ such that
	\begin{itemize}
		\item $\dbar u = \alpha$, and
		\item $\displaystyle\int_\Omega |u|^2_{(h^{\otimes m}, \omega)} e^{-\psi}  dV_\omega \leq \displaystyle\int_\Omega \sum_{1\le i, j \le n}(\psi^{(i\overline{j})}\alpha_i, \alpha_j)_{h^{\otimes m}} e^{-\psi}dV_\omega,$
	\end{itemize}
	where $dz:=dz_1 \wedge \ldots \wedge dz_n$ and $(\psi^{(i\overline{j})})_{1\le i,j\le n}$ is the inverse matrix of $(\frac{\partial^2 \psi}{\partial z_i \partial \overline{z}_j})_{1\le i,j\le n}$. 
\end{definition}
\begin{remark}
(1) The above $(\psi^{(i\overline{j})})_{1\le i,j\le n}$ corresponds to the inverse operator of $[\sqrt{-1}\ddbar \psi \otimes Id_{E^{\otimes m}}, \Lambda_{\omega}]$. Here $[\cdot, \cdot]$ denotes a graded Lie bracket, and 
$\Lambda_\omega$ denotes an adjoint of the operator $L:=\omega \wedge \cdot$.

(2) When $h$ is smooth and Nakano positive, we can show that $h$ satisfies the twisted H\"ormander condition.
\end{remark}

We will show that, under some regularity condition, the twisted H\"ormander condition implies the multiple $L^2$-extension property for vector bundles. 
\begin{theorem}\label{thm:conversevectorbundle}
We fix a bounded domain $\Omega' \Subset \Omega$. 
Assume that $|u|_{h^{\star}}$ is upper semi-continuous for any local holomorphic section $u\in \mathscr{O}(E^\star)$. 
Moreover we assume that $\log |u|_h$ is locally H\"older contionuous on $\Omega$ for any non-zero local holomorphic section $u$, 
i.e. for every $\Omega''\Subset \Omega$, $|\log |u(z)|_{h(z)}-\log |u(w)|_{h(w)}|\le C_{\Omega''}|z-w|^\alpha $ for positive constants $\alpha\in (0, 1] $ and 
$C_{\Omega''}>0$ independent of $u\in H^0(\Omega'', E)$. 
If $(E, h)$ satisfies the twisted H\"ormander condition (Definition \ref{def:hormandervector}) on $\Omega$, 
$(E, h)$ has $L^2$-extension property on $\Omega'$. Then, $(E, h)$ is also Griffiths semi-positive. 
\end{theorem}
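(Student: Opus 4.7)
The plan is to adapt the proof of Theorem~\ref{thm:hormander-converse} to the vector bundle setting, first establishing the multiple $L^2$-extension property on $\Omega'$ and then invoking the result of~\cite{DWZZ} (for which the assumed upper semi-continuity of $|u|_{h^\star}$ supplies the needed regularity) to conclude Griffiths semi-positivity. Fix $w\in\Omega'$, a nonzero $a\in E_w$ with $|a|_h<+\infty$, and an integer $m\geq 1$. Covering $\overline{\Omega'}$ by finitely many open sets on which $E$ is holomorphically trivial, I would select a trivializing neighborhood $U\ni w$ whose radius is bounded below uniformly in $w$, and, using a holomorphic frame of $E|_U$, extend $a$ to a local holomorphic section $\tilde a\in H^0(U,E)$ with $\tilde a(w)=a$.

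With the same cut-off $\chi$ as in the scalar proof, I would form the compactly supported smooth $E^{\otimes m}$-valued $(n,1)$-form
\[
\alpha := \dbar\chi\!\Big(\frac{|z-w|^2}{\epsilon^2}\Big)\wedge\tilde a^{\otimes m}\wedge dz,
\]
with $\epsilon$ small enough that $\mathrm{supp}(\alpha)\subset B(w,\epsilon)\subset U\cap\Omega'$, and apply the twisted H\"ormander condition on $\Omega$ with the same weight $\psi_{\epsilon,\delta}=\log(|z-w|^2+\epsilon^2)+n\log(|z-w|^2+\delta^2)$ used in Theorem~\ref{thm:hormander-converse}. Because $|\tilde a^{\otimes m}|_{h^{\otimes m}}=|\tilde a|_h^m$ and the lower bound $\sqrt{-1}\ddbar\psi_{\epsilon,\delta}\geq\bigl(\epsilon^2/(|z-w|^2+\epsilon^2)^2\bigr)\omega$ still holds, the right-hand side of the H\"ormander estimate equals the scalar integrand from the proof of Theorem~\ref{thm:hormander-converse} multiplied pointwise by $|\tilde a|_h^{2m}$. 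Applying the uniform H\"older estimate $\bigl|\log|\tilde a|_h(z)-\log|a|_h\bigr|\leq C_{\Omega''}|z-w|^\alpha$ then yields $\sup_{B(w,\epsilon)}|\tilde a|_h^{2m}\leq|a|_h^{2m}e^{2mC_{\Omega''}\epsilon^\alpha}$, so the scalar bookkeeping transplants verbatim to the vector bundle setting.

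Sending $\delta\to 0$ via the same weak-compactness and diagonal extraction, I would obtain a limit $u_\epsilon=\tilde u_\epsilon\otimes dz$; the pole of order $|z-w|^{-2n}$ in the limiting weight forces $\tilde u_\epsilon(w)=0$. Then $f_\epsilon:=\chi\tilde a^{\otimes m}-\tilde u_\epsilon$, interpreting $\chi\tilde a^{\otimes m}$ as extended by zero outside $U$, is a holomorphic section of $E^{\otimes m}$ on $\Omega$ with $f_\epsilon(w)=a^{\otimes m}$, and a triangle inequality on $\Omega'$ combined with the boundedness of $(|z-w|^2+\epsilon^2)|z-w|^{2n}$ on $\Omega'$ yields
\[
\int_{\Omega'}|f_\epsilon|_{h^{\otimes m}}^2\,dV_\omega \leq C'\,(\epsilon^n+\epsilon^{-1})^2\,|a|_h^{2m}\,e^{2mC_{\Omega''}\epsilon^\alpha},
\]
with $C'$ depending only on $n$ and $\Omega'$. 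Optimizing $\epsilon=(mC_{\Omega''})^{-1/\alpha}$ produces constants $C_m$ satisfying $\log C_m/m\to 0$, establishing the multiple $L^2$-extension property on $\Omega'$. The main technical obstacle is ensuring that the H\"older constant is genuinely uniform across the locally-chosen extensions $\tilde a$ as $(w,a)$ ranges, and that their domains of definition can be arranged with a uniform lower bound on size; this is precisely what the hypothesis together with the finite cover provides, and once it is in hand the rest is a direct transcription of the scalar argument, after which~\cite{DWZZ} delivers Griffiths semi-positivity.
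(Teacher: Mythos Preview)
Your proposal is correct and follows essentially the same route as the paper's proof. The only minor difference is that the paper, exploiting a trivialization of $E$ near $w$, extends $a$ as the \emph{constant} section $a^{\otimes m}$ rather than via a general holomorphic frame extension $\tilde a$, which slightly simplifies the bookkeeping (the variation of $|a|_{h(z)}$ over $B(w,\epsilon)$ then comes only from $h$); otherwise the cut-off $\chi$, the weight $\psi_{\epsilon,\delta}$, the estimate via $\sqrt{-1}\ddbar\psi_{\epsilon,\delta}\geq\bigl(\epsilon^2/(|z-w|^2+\epsilon^2)^2\bigr)\omega$, the weak-limit extraction as $\delta\to 0$, and the final choice $\epsilon=(mC_{\Omega'})^{-1/\alpha}$ are identical, and both conclude by invoking \cite{DWZZ}.
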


\begin{proof}
For any $w\in \Omega' $, we take any non-zero element $a\in E_w$. Taking the smooth function $\chi$ in the proof of Theorem \ref{thm:hormander-converse}, 
we define a $\dbar $-closed $E^{\otimes m}$-valued $(n, 1)$-form $\alpha $ by 
\begin{equation*}
\alpha := \dbar \left( a^{\otimes m} \chi\left({|z-w|^2 \over \epsilon^2 } \right) dz \right)= a^{\otimes m} \chi'\left(\frac{|z-w|^2}{\epsilon^2} \right) \sum_j \frac{(z_j-w_j)}{\epsilon^2} dz\wedge d\overline{z}_j. 
\end{equation*}
We also take the weight function 
\begin{equation*}
\psi_{\epsilon, \delta} := \log(|z-w|^2 + \epsilon^2) + n \log(|z-w|^2 + \delta^2)
\end{equation*}
for $\epsilon, \delta >0$. Since
\begin{align*}
\sum_{1\le i, j \le n}\left(\psi_{\epsilon, \delta}^{(i\overline{j})}\frac{(z_i-w_i) \chi'}{\epsilon^2}a^{\otimes m}, \frac{(z_j-w_j) \chi'}{\epsilon^2}a^{\otimes m}\right)_{h^{\otimes m}}&=\frac{|\chi'|^2}{\epsilon^4}|a^{\otimes m}|_{h^{\otimes m}}\sum_{1\le i, j \le n}\psi_{\epsilon, \delta}^{(i\overline{j})}(z_i-w_i) (\overline{z}_j-\overline{w}_j) \\
& \le \frac{|\chi'|^2}{\epsilon^4}|a^{\otimes m}|_{h^{\otimes m}} \frac{|z-w|^2(|z-w|^2+\epsilon^2)^2}{\epsilon^2}, 
\end{align*}
we have 
\begin{align*}
&\displaystyle\int_\Omega \sum_{1\le i, j \le n} \left(\psi_{\epsilon, \delta}^{(i\overline{j})}\frac{(z_i-w_i) \chi'}{\epsilon^2}a^{\otimes m}, \frac{(z_j-w_j) \chi'}{\epsilon^2}a^{\otimes m}\right)_{h^{\otimes m}} e^{-\psi_{\epsilon, \delta}} dV_\omega \\ 
& \le \frac{1}{\epsilon^6} \displaystyle\int_\Omega |\chi'|^2|a^{\otimes m}|_{h^{\otimes m}} |z-w|^2(|z-w|^2+\epsilon^2)^2 e^{-\psi_{\epsilon, \delta}} dV_\omega \\
& \le \frac{9}{\epsilon^6}\displaystyle\int_{\{ \epsilon^2/2 \le |z-w|^2 \le \epsilon^2 \}} \frac{|z-w|^2(|z-w|^2+\epsilon^2)}{(|z-w|^2+\delta^2)^n}|a|^{2m}_{h(z)} dV_\omega \\
& \le 9C_n \epsilon^{2n-2} \frac{1}{(\epsilon^2/2+\delta^2)^n}\sup_{|z-w|\le \epsilon}|a|^{2m}_{h(z)}
\end{align*}
for some positive constant $C_n$ depending only $n$. Repeating the same argument of the proof of Theorem \ref{thm:hormander-converse}, we find a smooth $E^{\otimes m}$-valued $(n,0)$ form $u_\epsilon$ such that 
	\begin{itemize}
		\item $\dbar u_\epsilon = \alpha$, and
		\item $\displaystyle\int_\Omega |u_\epsilon|^2_{(h^{\otimes m}, \omega)} e^{-\psi_{\epsilon}}  dV_\omega \leq  \frac{9C_n 2^n}{\epsilon^2}\sup_{|z-w|\le \epsilon}|a|^{2m}_{h(z)}$. 
	\end{itemize}
	
Since $\log |a|_{h(z)}$ is H\"older continuous, 
\begin{equation*}
\sup_{|z-w|\le \epsilon} |a|^{2m}_{h(z)} \le e^{2mC_{\Omega'}\epsilon^\alpha}|a|^{2m}_{h(w)}
\end{equation*}
for constants $\alpha \in (0, 1]$ and $C_{\Omega'}>0$. 

Let $f_\epsilon := a^{\otimes m}\chi dz - u_\epsilon$. Then $f_\epsilon $ is a holomorphic $(n, 0)$-form, $f_\epsilon(w)=a^{\otimes m}$, and satisfies the following inequality
\begin{equation*}
\displaystyle\int_{\Omega}|f_\epsilon|^2_{(h^{\otimes m, \omega})}dV_\omega \le C \left( \epsilon^{2n}+\frac{1}{\epsilon^2}\right) e^{2mC_{\Omega'}\epsilon^\alpha}|a|^{2m}_{h(w)}, 
\end{equation*} 
where $C>0$ is a constant depending on $\Omega'$. Taking $\epsilon=1/(mC_{\Omega'})^{1/\alpha}$, we have 
\begin{equation*}
\displaystyle\int_{\Omega}|f_\epsilon|^2_{(h^{\otimes m, \omega})}dV_\omega \le e^2C\left( \frac{1}{(mC_{\Omega'})^{\frac{2n}{\alpha}}} + (mC_{\Omega'})^{\frac{2}{\alpha}} \right)|a|^{2m}_{h(w)}. 
\end{equation*}
Since the above coefficient satisfies the growth condition $\frac{\log C_m}{m}\to 0$, we can conclude that $(E, h)$ has $L^2$-extension property on $\Omega'$. 
We also see that $(E, h)$ is Griffiths semi-positive from the results of \cite[Theorem 6.4]{DWZZ}. 
\end{proof}

\begin{remark}
Continuity of $\log h$ implies that $h$ is positive definite and finite on $\Omega$. 
In Theorem \ref{thm:hormander-converse}, $\phi$ has a plurisubharmonic extension from $\Omega \setminus {\rm Pole}(\phi)$ to $\Omega$. 
However, ``the singular set" of singular Hermitian metrics on vector bundles is much more complicated. 
In the case that $h$ has general singularity, we are not sure that Theorem \ref{thm:conversevectorbundle} holds. 
\end{remark}

\subsection{New positivity notions for vector bundles}
In this subsection, we introduce various positivity notions for vector bundles and compare them. Throughout this section, we let 
$(X, \omega)$ be a smooth Hermitian manifold, $E \to X$ be a holomorphic vector bundle of rank $r$ over $X$, 
and $h$ be a singular Hermitian metric on $E$. 

Firstly, 
we define a positively curved singular Hermitian metric in the sense of twisted H\"ormander. 

\begin{definition}\label{def:hormanderpositive}
We say that $(E, h)$ is {\it positively curved in the sense of twisted H\"ormander} if  
for any point $x\in X$, there exists an open neighborhood $U$ of $x$ such that $(E, h)$ satisfies {\it the twisted H\"ormander condition} (cf. Definition \ref{def:hormandervector}) on $U$. 
\end{definition}

We also define a positively curved metric in the sense of Ohsawa-Takegoshi. 

\begin{definition}\label{def:otpositive}
We say that $(E, h)$ is {\it positively curved in the sense of weak Ohsawa-Takegoshi} if  
for any point $x\in X$, there exists an open neighborhood $U$ of $x$ such that $(E, h)$ has {\it the multiple $L^2$-extension property} (cf. Definition \ref{def:multipleextensionproperty}) on $U$. 
\end{definition}

\begin{remark}
The Ohsawa-Takegoshi extension theorem usually states that the constant on the right-hand side of (\ref{eqn:l2extensionproperty}) is independent of $m$. 
Hence, we use the word ``weak Ohsawa-Takegoshi" in Definition \ref{def:otpositive}. 
\end{remark}

Theorem \ref{thm:conversevectorbundle} implies the following theorem. 

\begin{theorem}
Let $(E, h)$ be positively curved in the sense of twisted H\"ormander. If $\log |u|_h$ is locally 
H\"older continuous for any non-zero local holomorphic section $u$, $(E, h)$ is positively curved in the sense of weak Ohsawa-Takegoshi. 
\end{theorem}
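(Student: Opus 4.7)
The result follows essentially by localizing and invoking Theorem \ref{thm:conversevectorbundle}. Let me outline the plan.

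Fix an arbitrary point $x \in X$. By Definition \ref{def:hormanderpositive}, there is an open neighborhood $U$ of $x$ on which $(E,h)$ satisfies the twisted H\"ormander condition. Shrinking $U$ if necessary, I may assume that $U$ is the image of a coordinate chart biholomorphic to a bounded domain $\Omega \subset \mathbb{C}^n$, so that the hypotheses of Theorem \ref{thm:conversevectorbundle} make sense in the chart. Then I pick any relatively compact subdomain $\Omega' \Subset \Omega$ containing the image of $x$; the corresponding open set $U' \subset U$ will be the neighborhood on which I want to establish the multiple $L^2$-extension property.

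The next step is to verify that the two technical hypotheses of Theorem \ref{thm:conversevectorbundle} are met. The local H\"older continuity of $\log |u|_h$ is assumed directly. For the upper semi-continuity of $|u|_{h^\star}$ for local holomorphic sections $u$ of $E^\star$, I would argue as follows: H\"older continuity of $\log |u|_h$ for every non-vanishing local holomorphic section $u$ forces $h$ to be positive definite and finite, and in fact continuous as a Hermitian metric (this can be checked by writing $h$ in a local frame and using continuity of $\log |s_i + t s_j|_h$ for varying $t$, together with the parallelogram identity). Hence the dual metric $h^\star$ is also a continuous positive definite Hermitian metric, and $|u|_{h^\star}$ is continuous, in particular upper semi-continuous, for every local holomorphic section $u$ of $E^\star$.

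With both hypotheses verified, Theorem \ref{thm:conversevectorbundle} applied to $(\Omega, \Omega')$ yields that $(E,h)$ has the multiple $L^2$-extension property on $\Omega'$, i.e.\ on $U'$. Since $x$ was arbitrary, Definition \ref{def:otpositive} gives that $(E,h)$ is positively curved in the sense of weak Ohsawa-Takegoshi. The only non-bookkeeping point in the argument is the observation that H\"older continuity of $\log|u|_h$ on sections is strong enough to yield continuity of $h$ and hence of $h^\star$, so that the upper semi-continuity hypothesis of Theorem \ref{thm:conversevectorbundle} comes for free; this is the step that requires a small verification rather than a direct citation.
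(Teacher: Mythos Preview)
Your proposal is correct and follows the same approach as the paper, which simply records the theorem as an immediate consequence of Theorem \ref{thm:conversevectorbundle} without spelling out any details. Your contribution is to make explicit the one point the paper glosses over: Theorem \ref{thm:conversevectorbundle} also requires upper semi-continuity of $|u|_{h^\star}$, and you correctly observe that the assumed local H\"older continuity of $\log|s|_h$ forces $h$ (hence $h^\star$) to be continuous and positive definite, so this hypothesis is automatic; the paper alludes to this only in the remark following Theorem \ref{thm:conversevectorbundle}.
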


In \cite{DWZZ}, the authors proved that a vector bundle which has the multiple $L^2$-extension property is positively curved in the sense of Griffiths. 
Moreover, they proposed the next question. 

\begin{question}\label{qus:griffithsnakano}
Is $L^2$-extension property stronger than Griffiths positivity? 
Is it more or less equivalent to Nakano positivity?  
\end{question}

In this subsection, we give a partial answer to Question \ref{qus:griffithsnakano}. 
To be precise, we show the following theorem. 

\begin{theorem}\label{thm:notnakanopositive}{\rm (= Theorem \ref{mainthm:notnakanopositive})}
There is a positively curved vector bundle $(E, h)$ in the sense of weak Ohsawa-Takegoshi such that 
$(E, h)$ is not Nakano semi-positive. 
\end{theorem}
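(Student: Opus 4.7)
The plan is to construct a concrete smooth rank-$2$ Hermitian holomorphic bundle $(E,h)$ over a small ball $B\subset\mathbb{C}^2$ that is Griffiths positive on $B$, fails Nakano semi-positivity at the origin, and for which the multiple $L^2$-extension property (on a slightly shrunk ball $B'\Subset B$) can be verified via the Demailly--Skoda theorem combined with the classical Ohsawa--Takegoshi extension theorem.

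For the example, I will take $E = B\times\mathbb{C}^2$ trivial of rank two with a smooth Hermitian metric $h$ normalized so that $h(0)=I$ and whose Chern curvature tensor at the origin has the form
$$R_{ij\alpha\bar\beta} \;=\; (1+\varepsilon)\,\delta_{ij}\delta_{\alpha\beta} - \delta_{i\alpha}\delta_{j\beta}\qquad(0<\varepsilon\ll 1),$$
with higher-order terms of $h$ adjusted (for instance by adding a large multiple of $|z|^4 I$ to its Taylor polynomial) so as to keep Griffiths positivity throughout $B$. A direct Cauchy--Schwarz argument gives
$$\sum R_{ij\alpha\bar\beta}\,v^i\bar v^j\xi^\alpha\bar\xi^\beta \;=\; (1+\varepsilon)|v|^2|\xi|^2 - |v\cdot\xi|^2 \;>\; 0$$
on every nonzero decomposable tensor, while on $\tau=\mathrm{id}_{\mathbb{C}^2}$ the Nakano quadratic form equals $2(1+\varepsilon)-4<0$, so Nakano semi-positivity fails at $0$.

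To verify the weak Ohsawa--Takegoshi property, I will use the Demailly--Skoda theorem: for every $m\ge 0$ the bundle
$$F_m \;:=\; S^m E\otimes\det E$$
carries a smooth Nakano semi-positive Hermitian metric $(S^m h)\otimes\det h$. Fixing a nowhere-vanishing holomorphic frame $\sigma$ of $\det E$ on $B$ (available since $B$ is contractible), for any $w\in B'$ and any $a\in E_w$ I will apply the classical Ohsawa--Takegoshi theorem on the bounded pseudoconvex domain $B$ to the Nakano positive bundle $F_m$ at the point $w$, extending $a^{\otimes m}\otimes\sigma(w)\in F_{m,w}$ to $\tilde f_m\in H^0(B,F_m)$ with
$$\int_B |\tilde f_m|^2_{(S^m h)\otimes\det h}\,dV_\omega \;\le\; C_{\mathrm{OT}}\,|a|^{2m}_{h(w)}\,|\sigma(w)|^2_{\det h(w)},$$
the Ohsawa--Takegoshi constant $C_{\mathrm{OT}}$ depending only on the geometry of $B$ and not on the Nakano positive bundle $F_m$ (in particular not on $m$). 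Factoring $\tilde f_m = f_m\otimes\sigma$ on $B$ then produces the desired extension $f_m\in H^0(B,S^m E)\subset H^0(B,E^{\otimes m})$ of $a^{\otimes m}$, and using that $|\sigma|^2_{\det h}$ is smooth and bounded away from $0$ and $\infty$ on the compact set $\overline{B'}$ one obtains
$$\int_{B'}|f_m|^2_{h^{\otimes m}}\,dV_\omega \;\le\; C_{\mathrm{OT}}\,\frac{\max_{\overline{B'}}|\sigma|^2_{\det h}}{\min_{\overline{B'}}|\sigma|^2_{\det h}}\,|a|^{2m}_h,$$
with the right-hand side uniform in both $m$ and $w\in B'$. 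Hence $\log C_m/m \to 0$ and $(E,h)$ is positively curved in the sense of weak Ohsawa--Takegoshi.

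The main obstacle I anticipate is exactly the control of the $m$-dependence in the final step: one must use the Demailly--Skoda theorem in the sharp form which twists $S^m E$ by only a \emph{single} copy of $\det E$, so that the dependence on $m$ disappears after division by the fixed frame $\sigma$. A naive application of the basic form of Demailly--Skoda to $E^{\otimes m}$ would instead twist by $\det(E^{\otimes m})=(\det E)^{\otimes mr^{m-1}}$, and then division by $\sigma^{\otimes mr^{m-1}}$ would produce super-exponential constants, destroying the growth condition $\log C_m/m\to 0$. Keeping the line-bundle twist bounded in $m$ is what makes the bookkeeping work.
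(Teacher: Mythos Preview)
Your argument is essentially correct but follows a markedly different route from the paper's.

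The paper's proof is short and structural: it first proves a quotient lemma (if $(E,h)$ is positively curved in the sense of weak Ohsawa--Takegoshi, then so is any Hermitian quotient $(Q,h_Q)$), which is a two-line computation using that the quotient map is norm-nonincreasing. Then it invokes the classical example of the universal quotient bundle $Q$ on $\mathbb{P}^n$, sitting in $0\to\mathcal{O}(-1)\to\underline{\mathbb{C}}^{n+1}\to Q\to 0$. The trivial bundle $(\underline{\mathbb{C}}^{n+1},h_0)$ trivially has the multiple $L^2$-extension property (take constant extensions, $C_m=\mathrm{Vol}(U)$), so by the quotient lemma $(Q,h_q)$ does as well; but $(Q,h_q)$ is a standard example of a Griffiths-positive, non-Nakano-semi-positive bundle.

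Your approach instead builds the example by hand on a ball and verifies the extension property directly via the sharp Demailly--Skoda theorem for $S^mE\otimes\det E$ together with the vector-bundle Ohsawa--Takegoshi theorem. This is valid: the statement that Griffiths semi-positivity of $E$ implies Nakano semi-positivity of $S^mE\otimes\det E$ with a \emph{single} twist by $\det E$ is indeed known (Demailly's tensor-power paper; see also Liu--Sun--Yang), and the Ohsawa--Takegoshi constant for extension from a point in a fixed bounded pseudoconvex domain depends only on the geometry, not on the particular Nakano semi-positive metric, so it is uniform in $m$. Your caution about the naive application of Demailly--Skoda to $E^{\otimes m}$ (which would force a twist by $(\det E)^{mr^{m-1}}$ and ruin the growth condition) is exactly right; this is the one place where the argument could go wrong, and you have identified it correctly. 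Two small points to tighten: the existence of a smooth metric with the prescribed curvature 2-jet at the origin and Griffiths positivity on a small enough ball is standard (prescribe the 2-jet, then shrink), and the discrepancy between the $S^mh$-norm and the restricted $h^{\otimes m}$-norm on $S^mE\subset E^{\otimes m}$ is at worst polynomial in $m$ for fixed rank, hence harmless for the growth condition.

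What each approach buys: the paper's argument is cleaner, needs no curvature computations, and isolates the quotient lemma as a reusable structural fact. Your argument is more self-contained on a local model, avoids the quotient lemma entirely, and makes explicit the mechanism (Demailly--Skoda plus uniform Ohsawa--Takegoshi) by which any smooth Griffiths-positive bundle acquires the weak Ohsawa--Takegoshi property---an observation of independent interest that the paper does not record.
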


To prove this theorem, we prepare the following essential lemma. 

\begin{lemma}\label{lemma:quotient}
Let $(E, h)$ be positively curved in the sense of weak Ohsawa-Takegoshi, and $(Q, h_Q)$ be the quotient bundle and metric of $(E, h)$. 
Then, $(Q, h_Q)$ is also positively curved in the sense of weak Ohsawa-Takegoshi. 
\end{lemma}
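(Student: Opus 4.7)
The plan is to transport $E$-valued sections to $Q$-valued ones via the quotient map $\pi^{\otimes m} : E^{\otimes m} \to Q^{\otimes m}$ and apply the weak Ohsawa-Takegoshi property of $(E,h)$. Fix $x \in X$ and take an open neighborhood $U$ of $x$ on which $(E, h)$ satisfies the multiple $L^2$-extension property. Given a nonzero element $a \in Q_x$ with $|a|_{h_Q} < \infty$ and $m \geq 1$, by the definition of the quotient metric
$$|a|_{h_Q} = \inf\{|\tilde a|_h : \tilde a \in E_x,\ \pi(\tilde a) = a\},$$
I can pick a lift $\tilde a_m \in E_x$ with $\pi(\tilde a_m) = a$ and $|\tilde a_m|_h \le (1 + 1/m)|a|_{h_Q}$. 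Applying the $L^2$-extension property of $(E,h)$ to $\tilde a_m$ produces a holomorphic section $f_m \in H^0(U, E^{\otimes m})$ with $f_m(x) = \tilde a_m^{\otimes m}$ and $\int_U |f_m|^2_{h^{\otimes m}}\,dV_\omega \le C_m |\tilde a_m|_h^{2m}$ for constants $C_m$ independent of the point satisfying $\log C_m / m \to 0$. Set $g_m := \pi^{\otimes m}(f_m) \in H^0(U, Q^{\otimes m})$; this is holomorphic since $\pi^{\otimes m}$ is a holomorphic bundle map, and $g_m(x) = \pi^{\otimes m}(\tilde a_m^{\otimes m}) = (\pi(\tilde a_m))^{\otimes m} = a^{\otimes m}$.

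The main point to verify is the pointwise inequality $|\pi^{\otimes m}(W)|_{h_Q^{\otimes m}} \le |W|_{h^{\otimes m}}$ for every $W \in E^{\otimes m}_y$ at almost every $y \in U$. At a point $y$ where $h_y$ is a genuine positive definite Hermitian form (which occurs almost everywhere, by the definition of a singular Hermitian metric), decompose $E_y = S_y \oplus S_y^\perp$ orthogonally with $S_y := \ker \pi_y$. Under the induced identification $Q_y \cong S_y^\perp$, the quotient metric $h_Q$ on $Q_y$ agrees with the restriction $h_y|_{S_y^\perp}$, and hence $h_Q^{\otimes m}|_y$ corresponds to $h_y^{\otimes m}$ restricted to $(S_y^\perp)^{\otimes m}$. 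Under these identifications $\pi_y^{\otimes m}$ becomes the orthogonal projection of $E_y^{\otimes m}$ onto $(S_y^\perp)^{\otimes m}$ with respect to $h_y^{\otimes m}$, which is norm-decreasing.

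Granting this pointwise estimate, we conclude
$$\int_U |g_m|^2_{h_Q^{\otimes m}}\,dV_\omega \le \int_U |f_m|^2_{h^{\otimes m}}\,dV_\omega \le C_m (1 + 1/m)^{2m} |a|^{2m}_{h_Q} \le e^2 C_m |a|^{2m}_{h_Q}.$$
Setting $C'_m := e^2 C_m$, we still have $\log C'_m / m \to 0$, so $(Q, h_Q)$ has the multiple $L^2$-extension property on $U$, which proves the lemma. The main technical obstacle is the careful justification of the norm-decreasing property of $\pi^{\otimes m}$ with respect to the singular quotient tensor metric; once this is done, the remainder is bookkeeping on the constants and the choice of lift $\tilde a_m$ to guarantee that the loss $(1+1/m)^{2m}$ is absorbed into a uniformly bounded factor.
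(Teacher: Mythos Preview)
Your proof is correct and follows essentially the same approach as the paper: lift $a$ to $E$, apply the multiple $L^2$-extension property there, and push forward via $\pi^{\otimes m}$, using that the quotient map is norm-decreasing on tensor powers. The only differences are cosmetic---the paper takes an exact minimizing lift (so no $(1+1/m)$ factor is needed) and asserts the norm-decreasing inequality $|\beta^{\otimes m}\circ f_m|_{h_Q^{\otimes m}} \le |f_m|_{h^{\otimes m}}$ without the orthogonal-projection justification you spell out.
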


\begin{proof}
Let $\beta : E\to Q$ be the quotient map. 
For any point $x\in X$, there exists an open neighborhood $U$ of $x$ such that $(E, h)$ has the multiple $L^2$-extension property on $U$. 
For any $z\in U$ and a nonzero element $a\in Q_z$ with finite norm $|a|_{h_Q}<+\infty$, there is a nonzero element 
$b\in E_z$ such that $\beta (b) = a$ and $|a|_{h_Q}=|b|_h$. Since $(E, h)$ has the multiple $L^2$-extension property on $U$, for any $m\in \mathbb{N}$, there is a holomorphic 
section $f_m \in H^0(U, E^{\otimes m})$ such that $f_m(z)=b^{\otimes m}$ and 
\begin{equation*}
\int_{U} |f_m|^2_{h^{\otimes m}} dV_\omega \le C_m |b^{\otimes m}|^2_{h^{\otimes m}},
\end{equation*} 
where $C_m$ are constants satisfying the growth condition $\lim_{m\to \infty}\frac{1}{m}\log C_m=0$. Therefore we have 
\begin{align*}
\int_{U} |\beta^{\otimes m}\circ f_m|^2_{{h_Q}^{\otimes m}} dV_\omega & \le \int_{U} |f_m|^2_{{h}^{\otimes m}} dV_\omega \\
& \le C_m |b|^{2m}_{h} \\
& = C_m |a|^{2m}_{h_Q}.
\end{align*}
Consequently, we can conclude that $(Q, h_Q)$ has the multiple $L^2$-extension property on $U$. Hence, $(Q, h_Q)$ is positively curved in the sense of weak Ohsawa-Takegoshi. 
\end{proof}

Here we give the following example. 

\begin{example}\label{ex:notnakanosemipositive}
Let $Q$ be the vector bundle of rank $n$ over $\mathbb{P}^n$ defined by 
\begin{equation*}
0\to \mathcal{O}(-1)\to \underline{\mathbb{C}}^{n+1}\to Q\to 0, 
\end{equation*}
where $\underline{\mathbb{C}}^{n+1}$ is the trivial vector bundle of rank $n+1$ over $\mathbb{P}^n$ and $\mathcal{O}(-1)$ 
be the tautological line bundle. It is known that $Q$ is not Nakano semi-positive (cf. \cite[Chapter VII, Example 6.8]{DemCom}). 
\end{example}

Lemma \ref{lemma:quotient} and Example \ref{ex:notnakanosemipositive} imply 
Theorem \ref{thm:notnakanopositive}. 

\begin{proof}[\indent\sc Proof of Theorem \ref{thm:notnakanopositive}]
We consider the vector bundles $\underline{\mathbb{C}}^{n+1}$ and $Q$ over $\mathbb{P}^n$ in Example \ref{ex:notnakanosemipositive}. Let $h_0$ be the 
standard Euclidean metric on $\underline{\mathbb{C}}^{n+1}$ and $h_q$ be the quotient metric of $h_0$ on $Q$.  
Since $(\underline{\mathbb{C}}^{n+1}, h_0)$ is positively curved in the sense of weak Ohsawa-Takegoshi, 
$(Q, h_q)$ is also positively curved in the sense of weak Ohsawa-Takegoshi. However, 
$(Q, h_q)$ is not Nakano semi-positive. In conclusion, $(Q, h_q)$ satisfies the condition of Theorem \ref{thm:notnakanopositive}. 
\end{proof}

Now we consider the coherence of higher rank analogue of multiplier ideal sheaves.
For a line bundle with a singular Hermitian metric, the sheaf of locally square integrable holomorphic sections is called the multiplier ideal sheaf. It is known that multiplier ideal sheaves are coherent for positively curved singular Hermitian metrics.
For vector bundles, the coherence of these sheaves is not known in general due to the lack of results like $L^2$-estimates.
Here we prove that the twisted H\"ormander condition implies the coherence of the sheaf of square integrable holomorphic sections.

\begin{theorem}\label{thm:coherent}{\rm ($=$ Theorem \ref{mainthm:coherent})}
Let $(E, h)$ be positively curved in the sense of twisted H\"ormander. Assume that $|u|_{h^\star}$ is upper semi-continuous for any local holomorphic section $u\in \mathscr{O}(E^\star)$.
Then $\mathscr{E}(h)$ is a coherent subsheaf of $\mathscr{O}(E)$, 
where $\mathscr{E}(h)$ is the sheaf of locally square integrable holomorphic sections of $E$ with respect to $h$. 
\end{theorem}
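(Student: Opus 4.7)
The plan is to adapt the Nadel--Demailly coherence proof for multiplier ideal sheaves, with the twisted H\"ormander condition of Definition \ref{def:hormandervector} playing the role of the classical H\"ormander $L^2$-estimate. Fix $x_0 \in X$ and a relatively compact Stein coordinate chart $U$ of $x_0$ on which $E$ is trivial and the twisted H\"ormander condition holds. Let $\mathcal{H}(U) := \{s \in H^0(U, E) : \int_U |s|_h^2\, dV_\omega < +\infty\}$, and let $\mathscr{F}\subseteq \mathscr{O}(E)|_U$ be the $\mathscr{O}$-submodule generated by $\mathcal{H}(U)$. Writing $\mathscr{F}$ as the increasing union of the coherent subsheaves generated by finite subsets of $\mathcal{H}(U)$ and invoking the strong Noetherian property of coherent analytic sheaves, this union is locally eventually constant, so $\mathscr{F}$ is coherent. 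Since $\mathscr{F}\subseteq \mathscr{E}(h)$ is obvious, the task is to prove equality of stalks at $x_0$. As $\mathscr{O}(E)_{x_0}$ is a finitely generated (free) module over the Noetherian local ring $\mathscr{O}_{X,x_0}$, Krull's intersection theorem yields $\bigcap_k (\mathscr{F}_{x_0} + \mathfrak{m}_{x_0}^k \mathscr{O}(E)_{x_0}) = \mathscr{F}_{x_0}$, so it suffices to produce, for every $f \in \mathscr{E}(h)_{x_0}$ and every $k \geq 1$, some $s \in \mathcal{H}(U)$ with $s-f \in \mathfrak{m}_{x_0}^k \mathscr{O}(E)_{x_0}$.

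\textbf{Construction of $s$ via $\dbar$-surgery.} Following the template of Theorem \ref{thm:hormander-converse}, pick a holomorphic representative of $f$ on a ball $B = B(x_0, r) \Subset U$ together with a smooth cutoff $\chi$ which equals $1$ near $x_0$ and is supported in $B$. The $E$-valued $(n, 0)$-form $\tilde{f} := \chi f \cdot dz$ satisfies $\dbar \tilde{f} = (\dbar \chi) f \cdot dz =: \alpha$, which is smooth, $\dbar$-closed, and supported in an annular region where $|f|_h$ is bounded. Apply the twisted H\"ormander condition with the smooth strictly plurisubharmonic weight
\[
\psi_\delta(z) := (n+k)\log(|z-x_0|^2 + \delta^2) + A|z|^2
\]
for a fixed large constant $A>0$ and a parameter $\delta > 0$. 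Since $\sqrt{-1}\ddbar \psi_\delta \geq A \omega$ and $\alpha$ is supported away from $x_0$, the right-hand side of the $L^2$-estimate is bounded independently of $\delta$. The diagonal weak-compactness argument from the proof of Theorem \ref{thm:hormander-converse} then yields a limit $u$ with $\dbar u = \alpha$ and
\[
\int_U |u|_h^2\, |z - x_0|^{-2(n+k)} e^{-A|z|^2}\, dV_\omega < +\infty.
\]
Setting $s := \tilde{f} - u$, one has $\dbar s = 0$, so $s$ is a holomorphic $E$-valued $(n,0)$-form on $U$; its underlying section lies in $\mathcal{H}(U)$ because $\tilde{f}$ and $u$ are separately in $L^2(h)$, and $s - f = -u$ near $x_0$ where $\chi \equiv 1$.

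\textbf{Vanishing at $x_0$ and the main obstacle.} It remains to verify that $u$ vanishes to sufficiently high order at $x_0$. The upper semi-continuity of $|\cdot|_{h^\star}$ implies that $h^\star$ is locally bounded above, whence $h$ is locally bounded below by a positive-definite constant Hermitian form. This pointwise lower bound on $h$, combined with the $|z-x_0|^{-2(n+k)}$-weighted integrability of $|u|_h^2$ above, forces the holomorphic coefficient of $u$ to vanish to order at least $k+1$ at $x_0$, so $s - f \in \mathfrak{m}_{x_0}^{k+1} \mathscr{O}(E)_{x_0} \subseteq \mathfrak{m}_{x_0}^k \mathscr{O}(E)_{x_0}$, as required. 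The principal difficulty is the limit step: Definition \ref{def:hormandervector} permits only smooth strictly plurisubharmonic weights, so one cannot directly use a weight with a logarithmic pole at $x_0$. The approximation $\psi_\delta \nearrow \psi_0$ and the diagonal weak extraction handle this exactly as in the proof of Theorem \ref{thm:hormander-converse}, and the upper semi-continuity hypothesis is indispensable for converting the $L^2$-bound on $u$ into a genuine pointwise jet-vanishing of its underlying holomorphic section.
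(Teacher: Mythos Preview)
Your proof is correct and follows essentially the same route as the paper's, which is itself the Nadel--Demailly coherence argument: both use the weight $\psi_\delta=(n+k)\log(|z-x_0|^2+\delta^2)+A|z|^2$ (the paper takes $A=1$), solve the twisted $\dbar$-equation, pass to the limit $\delta\to0$ via weak compactness, and then use local boundedness of $h^\star$ (from the upper semi-continuity hypothesis) to convert the weighted $L^2$-bound into jet vanishing of $u$ at $x_0$. The only differences are cosmetic: you close with Krull's intersection theorem while the paper spells out Artin--Rees plus Nakayama, and your parenthetical claim that $|f|_h$ is \emph{bounded} on the annular support of $\dbar\chi$ is not quite justified---one only knows $|f|_h^2\in L^1$ there---but that is already enough for the uniform-in-$\delta$ bound on the right-hand side, so the argument goes through unchanged.
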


\begin{proof}
The following proof is based on the proof of \cite[Proposition 5.7]{DemAna}. 

For any point $x\in X$, there exists an open neighborhood $\Omega$ of $x$ such that $(E, h)$ satisfies the twisted H\"ormander condition on $\Omega$. 
Since coherence is a local property, we can assume that $\Omega$ is a bounded domain in $\mathbb{C}^n$, $E=\Omega\times \underline{\mathbb{C}}^r$ is the trivial bundle over $\Omega$, and each element of $h^\star$ is bounded on $\Omega$. 
Let $H^0_{(2, h)}(\Omega, \underline{\mathbb{C}}^r)$ be the square integrable $\mathbb{C}^r$-valued holomorphic functions with respect to $h$ on $\Omega$. 
By the strong Noetherian property of coherent sheaves, $H^0_{(2, h)}(\Omega, \underline{\mathbb{C}}^r)$ generates a coherent ideal sheaf $\mathscr{F}\subset \mathscr{O}(E) = \mathscr{O}(\underline{\mathbb{C}}^r)$. 
First of all, we will show that 
\begin{equation*}
\mathscr{F}_x + \mathscr{E}(h)_x \cap \mathfrak{m}^{k+1}_x\cdot \mathscr{O}(\underline{\mathbb{C}}^r)_x = \mathscr{E}(h)_x, 
\end{equation*}
where $\mathfrak{m}_x$ is a maximal ideal of $\mathscr{O}_{\Omega, x}$ and $k$ is any positive integer. 
It is enough to show that 
\begin{equation}
\mathscr{E}(h)_x \subset \mathscr{F}_x + \mathscr{E}(h)_x \cap \mathfrak{m}^{k+1}_x\cdot \mathscr{O}(\underline{\mathbb{C}}^r)_x.\label{eq:coherent}
\end{equation} 
We take any element $f={}^t(f_{1,x}, \cdots, f_{r, x})\in \mathscr{E}(h)_x$, where $f_i$ is a holomorphic function defined in a neighborhood $U$ of $x$. 
Let $\theta$ be a cut-off fuction with support in $U$ such that $\theta=1$ in a neighborhood of $x$. We define a $\dbar$-closed $\mathbb{C}^r$-valued $(n, 1)$-form $\alpha$ as
\begin{equation*}
\alpha = \dbar(\theta f dz). 
\end{equation*}
We also take a weight function 
\begin{equation*}
\psi_\delta(z) = (n+k)\log (|z-x|^2+\delta^2)+|z|^2. 
\end{equation*}
Solving a $\dbar$-equation, we get a smooth $\mathbb{C}^r$-valued $(n, 0)$-form $u_\delta$ such that 
\begin{itemize}
		\item $\dbar u_\delta = \alpha$, and
		\item $\displaystyle\int_\Omega |u_\delta|^2_{(h, \omega)} e^{-\psi_\delta}  dV_\omega \leq \displaystyle\int_\Omega \sum_{1\le i, j \le n}(\psi_\delta^{(i\overline{j})}\alpha_i, \alpha_j)_{h} e^{-\psi_\delta}dV_\omega.$
	\end{itemize}
The right-hand side of the above inequality has an upper bound independent of $\delta$. 
Taking limits $\delta \to 0$ and repeating the argument of the proof of Theorem \ref{thm:conversevectorbundle}, 
we obtain a smooth $\mathbb{C}^r$-valued $(n, 0)$-form $udz$ such that 
\begin{itemize}
		\item $\dbar (udz)= \alpha$, and
		\item $\displaystyle\int_\Omega \frac{|u|^2_{h}}{|z-x|^{2(n+k)}}   dV_\omega < +\infty .$
	\end{itemize}
Since each element of $h^\star$ is bounded, there exists a positive constant $C$ such that 
\begin{equation*}
|g|^2_h \ge C |g|^2 = C (|g_1|^2+\cdots + |g_r|^2) 
\end{equation*}
for any $\mathbb{C}^r$-valued smooth function $g$. Hence we get 
\begin{equation*}
\int_\Omega \frac{|u_i|^2}{|z-x|^{2(n+k)}}   dV_\omega < +\infty. 
\end{equation*}
Letting $F:= \theta f-u$, we obtain $F\in H^0_{(2, h)}(\Omega, \underline{\mathbb{C}}^r)$ and $f_x - F_x = u_x\in \mathscr{E}(h)_x \cap \mathfrak{m}^{k+1}\cdot \mathscr{O}(\underline{\mathbb{C}}^r)$. 
This proves (\ref{eq:coherent}). 

Finally, we will prove $\mathscr{F}_x=\mathscr{E}(h)_x$. 
The Artin-Rees lemma implies that there exists an integer $l\ge 1 $ such that 
\begin{equation*}
\mathfrak{m}^{k+1}_x\cdot \mathscr{O}(\underline{\mathbb{C}}^r)_x \cap \mathscr{E}(h)_x = \mathfrak{m}_x^{k-l+1}(\mathfrak{m}^{l}_x\cdot \mathscr{O}(\underline{\mathbb{C}}^r)_x \cap \mathscr{E}(h)_x)
\end{equation*}
holds for any $k\ge l-1$. Therefore, we have 
\begin{align*}
\mathscr{E}(h)_x &= \mathscr{F}_x + \mathscr{E}(h)_x \cap \mathfrak{m}^{k+1}_x\cdot \mathscr{O}(\underline{\mathbb{C}}^r)_x \\
&=\mathscr{F}_x + \mathfrak{m}_x^{k-l+1}(\mathfrak{m}^{l}_x\cdot \mathscr{O}(\underline{\mathbb{C}}^r)_x \cap \mathscr{E}(h)_x) \\
& \subset \mathscr{F}_x + \mathfrak{m}_x\cdot \mathscr{E}(h)_x \\
& \subset \mathscr{E}(h)_x. 
\end{align*}
By Nakayama's lemma, we obtain $\mathscr{F}_x=\mathscr{E}(h)_x$. Thus we can conclude that $\mathscr{E}(h)$ is coherent. 
\end{proof}

Finally, we give a simple example of a singular Hermitian metric satisfying the twisted H\"ormander condition. 

\begin{theorem}\label{thm:examplehormander}
Let $h$ be a Griffiths semi-positive singular Hermitian metric. Then, $(E\otimes \det E, h\otimes \det h)$ is positively curved 
in the sense of twisted H\"ormander. 
\end{theorem}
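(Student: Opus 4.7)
The plan is to reduce to the smooth case by local approximation, establish Nakano semi-positivity of all tensor powers $(E\otimes\det E)^{\otimes m}$ via a Schur-functor refinement of the Demailly-Skoda theorem, apply the classical H\"ormander $L^2$-estimate in the smooth case, and finally pass to the limit. Since the twisted H\"ormander condition is local, I would fix $x\in X$ and work on a relatively compact coordinate neighborhood $U$ of $x$ on which $E$ is trivial. By a standard approximation theorem for Griffiths semi-positive singular Hermitian metrics on vector bundles (see, e.g., \cite{PT}, \cite{Rau1}), I obtain smooth Griffiths semi-positive Hermitian metrics $h_\nu$ on $E|_U$ with $h_\nu\le h$ in the sense of Hermitian forms and $|s|^2_{h_\nu}\nearrow |s|^2_h$ pointwise on $U$ for every local holomorphic section $s$.

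For each smooth $h_\nu$ and each integer $m\ge 1$, I would decompose $E^{\otimes m}$ as an orthogonal direct sum of Schur-functor pieces, $E^{\otimes m}=\bigoplus_{\nu\vdash m,\,\ell(\nu)\le r}(\mathbb{S}^\nu E)^{\oplus d_\nu}$, so that
\[
(E\otimes\det E)^{\otimes m}\;=\;E^{\otimes m}\otimes(\det E)^{\otimes m}\;=\;\bigoplus_\nu\bigl(\mathbb{S}^\nu E\otimes(\det E)^{\otimes m}\bigr)^{\oplus d_\nu}.
\]
By Demailly's generalization of the Demailly-Skoda theorem (cf.\ \cite{DemCom}), $\mathbb{S}^\nu E\otimes\det E$ is Nakano semi-positive whenever $(E,h_\nu)$ is Griffiths semi-positive, and further tensoring with the semi-positive line bundle $(\det E)^{\otimes m-1}$ preserves Nakano semi-positivity. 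Hence $(E\otimes\det E)^{\otimes m}$ is Nakano semi-positive with respect to $(h_\nu\otimes\det h_\nu)^{\otimes m}$. The classical Andreotti-Vesentini-H\"ormander $L^2$-estimate, applied to this smooth Nakano semi-positive bundle with the extra strictly plurisubharmonic weight $\psi$, then produces smooth solutions $u_\nu$ of $\dbar u_\nu=\alpha$ satisfying the inequality in Definition~\ref{def:hormandervector} with the metric $(h_\nu\otimes\det h_\nu)^{\otimes m}$.

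To pass to the limit, I would exploit the monotonicity $(h_\nu\otimes\det h_\nu)^{\otimes m}\nearrow (h\otimes\det h)^{\otimes m}$: the right-hand side of the smooth estimate is bounded above by the corresponding quantity with $h\otimes\det h$, uniformly in $\nu$. For each fixed $\nu_0$, this yields a uniform bound on $u_\nu$ (for $\nu\ge\nu_0$) in $L^2\bigl((h_{\nu_0}\otimes\det h_{\nu_0})^{\otimes m}e^{-\psi}\bigr)$, so a diagonal weak-compactness argument analogous to the one used in the proof of Theorem~\ref{thm:hormander-converse} produces a weak limit $u$ lying in each such Hilbert space. Lower semi-continuity of $L^2$-norms under weak convergence, combined with monotone convergence as $\nu_0\to\infty$, then delivers the twisted H\"ormander estimate with $h\otimes\det h$, while weak continuity of $\dbar$ gives $\dbar u=\alpha$. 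The main obstacle is the Schur-functor step: the case $m=1$ is the classical Demailly-Skoda theorem, but to handle all tensor powers $m$ uniformly I need the partition-level refinement, which requires careful bookkeeping of the Schur-functor decomposition and the Nakano-positivity behaviour under tensor products with semi-positive line bundles.
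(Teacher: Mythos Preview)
Your proposal is correct and follows the same overall strategy as the paper: approximate $h$ locally by smooth Griffiths semi-positive metrics $h_\nu$, establish Nakano semi-positivity of $(h_\nu\otimes\det h_\nu)^{\otimes m}$, apply the classical H\"ormander estimate with the extra weight $e^{-\psi}$, and pass to the limit via monotonicity and a diagonal weak-compactness argument.

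The one substantive difference is what you flag as ``the main obstacle.'' You handle the tensor powers by a Schur-functor decomposition $E^{\otimes m}=\bigoplus_\lambda(\mathbb{S}^\lambda E)^{\oplus d_\lambda}$ together with Demailly's partition-level refinement of Demailly--Skoda. This works, but the paper bypasses it entirely: once Demailly--Skoda gives that $g_\nu:=h_\nu\otimes\det h_\nu$ is Nakano semi-positive, the paper simply uses that $g_\nu^{\otimes m}$ is then automatically Nakano semi-positive. The point is that Nakano semi-positivity is preserved under tensor products: writing $u\in T_X\otimes V\otimes W$ along an orthonormal frame $(f_\alpha)$ of $W$ as $u=\sum_\alpha u_\alpha\otimes f_\alpha$ with $u_\alpha\in T_X\otimes V$, the $\Theta_V\otimes I_W$-contribution to the Nakano form is $\sum_\alpha\widetilde\Theta_V(u_\alpha,u_\alpha)\ge 0$, and symmetrically for $I_V\otimes\Theta_W$. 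So your Schur-functor detour is unneeded, and the only Demailly--Skoda input required is the classical $m=1$ statement. Two minor points: the approximation the paper invokes is Berndtsson--P\u{a}un \cite{BP} (on each member of a Stein exhaustion $U_\lambda\Subset U$, hence the extra index $\lambda$ in the paper's diagonal argument), rather than \cite{PT} or \cite{Rau1}.
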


\begin{proof}
We set $G:=E\otimes \det E$ and $g:=h\otimes \det h$ for simplicity. For any point $x\in X$, 
we take an open Stein neighborhood $U$ of $x$ such that $E$ is trivial over $U$. It is enough to show that 
$(G, g)$ satisfies the twisted H\"ormander condition on $U$. Let $m$ be a positive integer, $\psi$ be a smooth strictly plurisubharmonic function on $U$, 
and $\alpha=\sum_j \alpha_j dz\wedge d\overline{z}_j$ be a smooth $\dbar$-closed $G^{\otimes m}$-valued $(n, 1)$-form with compact support and finite norm $\int_U \sum_{1\le i, j \le n}(\psi^{(i\overline{j})}\alpha_i, \alpha_j)_{g^{\otimes m}} e^{-\psi}dV_\omega<+\infty$. 
We take a Stein exhaustion $\{ U_\lambda \}_{\lambda}$ of $U$ such that $U_\lambda \Subset U_{\lambda+1} \Subset \cdots \Subset U$ and each $U_\lambda$ is a Stein domain. 
It is known that there exists a sequence of smooth Hermitian metrics $\{ h_\nu \}_{\nu =1}^\infty$ with positive Griffiths curvature, increasing pointwise to $h$ on each $U_\lambda$ (\cite[Proposition 3.1]{BP}). 
From the results of Demailly-Skoda (\cite{DSk}), we see that $g_\nu$ is Nakano semi-positive for each $\nu$, where $g_\nu := h_\nu \otimes \det h_\nu$. 
Since $g^{\otimes m}_\nu e^{-\psi}$ is Nakano positive, we obtain a smooth $G^{\otimes m}$-valued $(n, 0)$-form $u_{\lambda, \nu}$ such that $\dbar u_{\lambda, \nu} = \alpha$ and 
\begin{align*}
\int_{U_\lambda} |u_{\lambda, \nu}|^2_{(g_\nu^{\otimes m}, \omega)}e^{-\psi} dV_\omega &\le \int_{U_\lambda} ([\sqrt{-1}\Theta_{g_\nu^{\otimes m}e^{-\psi}}, \Lambda_\omega ]^{-1}\alpha, \alpha)_{(g_\nu^{\otimes m}, \omega)} e^{-\psi}dV_\omega \\
& \le \int_{U_\lambda} \sum_{1\le i, j \le n}(\psi^{(i\overline{j})}\alpha_i, \alpha_j)_{g_\nu^{\otimes m}} e^{-\psi}dV_\omega \\
& \le \int_{U} \sum_{1\le i, j \le n}(\psi^{(i\overline{j})}\alpha_i, \alpha_j)_{g^{\otimes m}} e^{-\psi}dV_\omega 
\end{align*}
by using H\"ormander's $L^2$-estimate. 
Using a diagonal argument and taking weak limits $\lambda \to \infty, \nu \to \infty$, we get a smooth solution $u$ such that 
\begin{itemize}
		\item $\dbar u = \alpha$, and
		\item $\displaystyle\int_U |u|^2_{(g^{\otimes m}, \omega)} e^{-\psi}  dV_\omega \leq \displaystyle\int_U \sum_{1\le i, j \le n}(\psi^{(i\overline{j})}\alpha_i, \alpha_j)_{g^{\otimes m}} e^{-\psi}dV_\omega.$
	\end{itemize}
Therefore, we can conclude that $(G, g)$ satisfies the twisted H\"ormander condition on $U$. 
\end{proof}

Inspired by Theorem \ref{thm:coherent} and Example \ref{thm:examplehormander}, we propose the following question in relation to Question \ref{qus:griffithsnakano}. 

\begin{question}\label{que:nakanohormander}
Is positivity in the sense of twisted H\"ormander is more or less equivalent to Nakano positivity?
\end{question}


\ \\
\vskip3mm
{\bf Acknowledgment. }
The authors would like to thank their supervisor Prof. Shigeharu Takayama for inspiring and helpful comments. 
This work is supported by the Program for Leading Graduate Schools, MEXT, Japan. 
This work is also supported by JSPS KAKENHI Grant Number 18J22119.



\end{document}